\documentclass[12pt,leqno,a4paper]{amsart}
\usepackage{amssymb, amsmath, enumerate,}
\usepackage{enumerate,color}
\usepackage[numbers]{natbib}
\usepackage{url}

\newtheorem{theorem}{Theorem}[section]
\newtheorem{lemma}[theorem]{Lemma}

\newtheorem*{thmA}{Theorem A}
\newtheorem*{thmB}{Theorem B}
\newtheorem*{thmC}{Theorem  C}
\newtheorem*{thmD}{Theorem  D}

\theoremstyle{remark}
\newtheorem{remark}[theorem]{Remark}

\newtheorem{definition}[theorem]{Definition}
\numberwithin{equation}{section}

\begin{document}
\title[Proper subgroups  of  order divisible  by $p$ are supersolvable]{Finite groups whose proper subgroups of order divisible by $p$ are supersolvable}
    \author[Beltr\'an and Viudez]{Antonio Beltr\'an\\
     Departamento de Matem\'aticas\\
      Universitat Jaume I \\
     12071 Castell\'on\\
      Spain\\
     \\Fernando Viudez \\
Departamento de Matem\'aticas\\
      Universitat Jaume I \\
     12071 Castell\'on\\
      Spain\\
     }

 \thanks{Antonio Beltr\'an: abeltran@uji.es ORCID ID: https://orcid.org/0000-0001-6570-201X \newline
 \indent Fernando Viudez: fviudez@uji.es }

\keywords{supersolvable groups, maximal subgroups, simple groups}

\subjclass[2010]{20D10, 20D20}

\begin{abstract}
Let $p$ be an odd prime, and let $G$ be a finite  group  whose order is divisible by $p$. Suppose that every proper subgroup of $G$ whose  order is divisible by $p$ is supersolvable, while $G$ itself is not. In this paper, we investigate the arithmetic and structural properties of such groups, addressing both the solvable and nonsolvable cases.

\end{abstract}

\maketitle

\section{Introduction}

In this paper, all groups are assumed to be finite and standard notation is followed (e.g.
 \cite{KurzweilStellmacher}). Let $\mathcal{X}$ be a class of groups. A group  $G$ is called a $\mathcal{X}$-critical group if $G\notin \mathcal{X}$ and every proper subgroup of $G$ belongs to $\mathcal{X}$; that is, $G$ is a minimal non-$\mathcal{X}$-group. Well-known examples of critical groups include minimal non-abelian groups (Miller and Moreno \cite{MillerMoreno}), minimal non-nilpotent groups (Schmidt \cite{Schmidt}), and minimal non-supersolvable groups (Doerk \cite{Doerk}). Beyond these classical examples, $\mathcal{X}$-critical groups have  been studied in a  number of other classes of groups.
Following the nomenclature adopted by several authors \cite{LiMeng, Meng}, we introduce the following definition.

\begin{definition}
 Let $p$ be a prime number. We say that a group $G$ is a $pd$-group if its order is divisible by $p$. The group $G$ is called $\mathcal{X}$-semicritical with respect to $p$ if $G$ is a $pd$-group, $G\not\in \mathcal{X}$,  and every proper $pd$-subgroup of $G$ lies in $\mathcal{X}$.
 \end{definition}

  The study of $\mathcal{X}$-semicritical  groups with respect to a prime reveals minimal local obstructions that prevent a group from belonging to $\mathcal{X}$,  thus illustrating how local and global properties interact in the theory of classes of groups. However, as noted in \cite{Meng}, determining the structure of an $\mathcal{X}$-semicritical group is, in general, a difficult problem. A simpler problem arises, requiring only elementary results,  when $\mathcal{X}=\mathcal{N}$ or $\mathcal{A}$, that is, when considering the classes of nilpotent or abelian groups, respectively. In this context, Shi and Tian \cite{ShiTian}  recently provided a complete characterization of the $\mathcal{A}$-semicritical and $\mathcal{N}$-semicritical groups with respect to an arbitrary prime. Regarding $\mathcal{U}$, the class of supersolvable groups,   Lu and Meng \cite{Meng} were able to determine the solvability and several additional properties of $\mathcal{U}$-semicritical groups with respect to $2$. On the other hand, the case $p=3$ was studied by Shao and the first author in \cite{BeltranShao}, who based their approach on a refinement of the classification of minimal simple groups to identify nonabelian  simple  $\mathcal{U}$-semicritical groups  with respect to $3$. Another contribution, due to Li \cite{LiMeng}, investigates nonsolvable groups whose second maximal $3d$-subgroups are  supersolvable.

\medskip
 In general, very little is known about the structure of $\mathcal{U}$-semicritical  groups with respect to an arbitrary  odd prime (other than $3$), and this is precisely the objective  of  the present paper.
 Our first result reveals a highly restrictive structure in the nonsolvable case. Its proof is  based on the Classification of Finite Simple Groups, but unlike \cite{BeltranShao}, we rely on elementary and deep results related to the coprime action on nonabelian simple groups. Recall that a group $G$ is said to be quasi-simple when $G$ is perfect (i.e. $[G,G]=G$) and $G/{\bf Z}(G)$ is a nonabelian simple group.

 \begin{thmA}
 Let $G$ be a  nonsolvable group and $p$ an odd prime. Then $G$ is $\mathcal{U}$-semicritical with respect to $p$ if and only if  $G$ is quasi-simple, ${\bf Z}(G)={\bf O}_{p'}(G)$, and $G/{\bf Z}(G)$ is   $\mathcal{U}$-semicritical with respect to $p$.
\end{thmA}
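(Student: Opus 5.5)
We prove the two implications separately; the forward direction carries almost all the weight.

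\emph{Sufficiency.} Assume $G$ is quasi-simple, ${\bf Z}(G)={\bf O}_{p'}(G)$, and $\bar G=G/{\bf Z}(G)$ is $\mathcal{U}$-semicritical with respect to $p$. Then $p$ divides $|G|$, and $G$ is not supersolvable because it is nonsolvable. Let $H<G$ be a proper $pd$-subgroup. If ${\bf Z}(G)\le H$, then $H/{\bf Z}(G)$ is a proper subgroup of $\bar G$ whose order is divisible by $p$, since ${\bf Z}(G)$ is a $p'$-group. Hence $H/{\bf Z}(G)$ is supersolvable, and as ${\bf Z}(G)\cap H$ is central in $H$, the group $H$ is supersolvable. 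If ${\bf Z}(G)\not\le H$, then $H{\bf Z}(G)$ is still proper, because $G$ is perfect and ${\bf Z}(G)\le\Phi(G)$. So $H{\bf Z}(G)/{\bf Z}(G)$ is a proper $pd$-subgroup of $\bar G$, hence supersolvable. It follows that $H{\bf Z}(G)$ is a central extension of a supersolvable group, hence supersolvable, and so is its subgroup $H$.

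\emph{Necessity.} Let $G$ be nonsolvable and $\mathcal{U}$-semicritical with respect to $p$. The plan has four steps.

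1. Every proper normal subgroup $N$ is supersolvable or a $p'$-group, and every maximal subgroup is supersolvable or a $p'$-group. Since a nonsolvable group is not a $p'$-group here (it is a $pd$-group), this forces $G$ to have a unique nonsolvable "layer".

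2. Let $R$ be the solvable radical. If $p\mid |R|$, take a Sylow $p$-subgroup $P$ of $G$. For any maximal subgroup $M$ not containing $R$, we have $G=RM$ and $M$ is a $pd$-group, so $M$ is supersolvable and $G/R\cong M/(M\cap R)$ is solvable, a contradiction. Hence every maximal subgroup contains $R$, so $R\le\Phi(G)$. Then, by a Frattini argument, $G={\bf N}_G(P\cap R)$-type factorizations apply. I expect a cleaner route instead: argue that $G/R$ cannot be a $pd$-group only through $R$. Precisely, choose a minimal subgroup $K$ with $KR=G$. Then $K$ is a $pd$-group unless $p\nmid|G/R|$, and in the latter case Frattini gives $G=R\,{\bf N}_G(P)$ with ${\bf N}_G(P)$ a proper $pd$-subgroup, hence supersolvable, forcing $G/R$ solvable. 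This case must be excluded carefully.

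3. Conclude that $p\mid|G/R|$ and that every proper subgroup $L$ with $LR=G$ is impossible, so $R\le\Phi(G)$. Let $G^{(\infty)}$ be the solvable residual. If it is proper, it is a nonsolvable $pd$-group (the $p$-part lies above $R$), contradicting step 1. So $G$ is perfect, and $G/R$ is a minimal normal subgroup. It must be simple: if $G/R=S^k$ with $k>1$, then a diagonal or $S^{k-1}\times$(Sylow normalizer) construction gives a proper nonsolvable $pd$-subgroup. This uses the fact that $p$ divides $|S|$.

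4. Show $R={\bf Z}(G)$ and that it is a $p'$-group. For each $r\in R$, the subgroup ${\bf C}_G(\text{chief factor})$ is normal; nonsolvability of $G/{\bf C}$ would be needed, but $\mathrm{Aut}$ of a chief factor of the solvable $R$ acted on by the simple quotient is the obstacle. Instead, take $Q\in{\rm Syl}_q(R)$ and use Frattini: $G=R\,{\bf N}_G(Q)$, so ${\bf N}_G(Q)=G$ since $R\le\Phi(G)$. Thus every Sylow subgroup of $R$ is normal, and $R$ is nilpotent. Now $G$ acts on $Q/\Phi(Q)$ with a perfect quotient. If this action is nontrivial, a $p$-element $x$ of $G$ acts nontrivially, unless $p\mid q$. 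Then the subgroup $Q\langle x\rangle$, extended by a nonsolvable piece, should contain a proper non-supersolvable $pd$-subgroup. Here I would use coprime action on the simple group $G/R$: a $pd$ maximal subgroup $M\ge R$ of $G$ is supersolvable, and the supersolvable $M$ acts on $Q$ with abelian exponent-dividing chief factors. I would then show that the subgroups generated by such $M$ force $G/{\bf C}_G(Q/\Phi(Q))$ to be abelian-by-action, hence trivial because $G$ is perfect. Hence $G$ centralizes $Q/\Phi(Q)$, and so centralizes $Q$ by coprimeness, or, when $q=p$, via a separate argument. When $q=p$ we need ${\bf O}_p(G)=1$: if $P_0={\bf O}_p(G)\ne1$, then $P_0\le\Phi(G)$, and a Sylow $r$-normalizer for $r\ne p$ times $P_0$ should give a proper nonsupersolvable $pd$-subgroup. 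I expect this to be the main obstacle, where Gaschütz-type arguments and the CFSG fact (on the existence of a non-supersolvable subgroup of a simple group of order divisible by $p$ containing a suitable element) are needed. Once $R$ is central and a $p'$-group, $G$ is quasi-simple with ${\bf Z}(G)={\bf O}_{p'}(G)$. Finally, $G/{\bf Z}(G)$ inherits semicriticality: each proper $pd$-subgroup of it has a proper $pd$ preimage in $G$, which is supersolvable.
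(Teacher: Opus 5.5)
Your sufficiency argument is correct and essentially the paper's. The necessity part has a genuine gap, located exactly where the paper needs the Classification.

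In Step 1 you assert that a nonsolvable subgroup ``is not a $p'$-group here''. In Step 3 you assert that a proper $G^{(\infty)}$ is a $pd$-group because ``the $p$-part lies above $R$''. The same assumption is hidden in your claim that $G/R$ is a simple minimal normal subgroup. Nothing justifies it: a priori $G$ may have a nonsolvable normal $p'$-subgroup $L$ with $G=L\langle x\rangle$, where $x$ is a $p$-element acting coprimely on $L$. For instance, for $p\ge 5$ the group $L={\rm PSL}(2,2^p)$ has $p'$-order, and a field automorphism $x$ of order $p$ has ${\bf C}_L(x)\cong S_3$, which is even supersolvable. So you genuinely have to exhibit a non-supersolvable proper subgroup $H\langle x\rangle$ with $H$ an $x$-invariant subgroup of $L$.

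The paper handles this case as follows.
\begin{itemize}
\item It takes $\overline{L}$ minimal normal in $G/S(G)$ inside the image of ${\bf O}_{p'}(G)$, and gets $G=\langle x\rangle L$.
\item Since $\langle x\rangle{\bf C}_L(x)<G$, the group ${\bf C}_L(x)$ is supersolvable, hence so is ${\bf C}_{\overline{L}}(x)$ by Lemma \ref{Lemma 1}, and $\overline{L}$ is simple by Lemma \ref{Lemma 2}.
\item Theorem \ref{T2.3}, which rests on CFSG, then gives $\overline{L}\cong{\rm PSL}(2,2^r)$ or ${\rm Sz}(2^r)$, with ${\bf C}_{\overline{L}}(x)\cong S_3$ or ${\rm Sz}(2)$.
\item For an $x$-invariant Sylow $2$-subgroup $Q$ of $L$, supersolvability of $Q\langle x\rangle$ with $p$ odd forces $[Q,x]=1$. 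So a Sylow $2$-subgroup of $\overline{L}$ would lie in ${\bf C}_{\overline{L}}(x)$, which is impossible by orders.
\end{itemize}
Your outline never confronts this case, and without it Steps 3 and 4 have nothing to stand on.

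Conversely, the case you flag as ``the main obstacle'' and expect to need CFSG for, namely $q=p$, i.e.\ $p\mid |R|$, is elementary. Every proper subgroup of $G/R$ has a proper preimage containing $R$, hence of order divisible by $p$, hence supersolvable. So $G/R$ is supersolvable or minimal non-supersolvable, and therefore solvable, a contradiction. (Your Step 2 assertion that a maximal $M$ with $G=RM$ is a $pd$-group is also unjustified in that case.)

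Once $R\le{\bf O}_{p'}(G)$ and $R=\Phi(G)$ are known, Step 4 still needs an actual mechanism instead of ``abelian-by-action''. The paper proceeds as follows.
\begin{itemize}
\item Frobenius's normal $p$-complement criterion gives a $p$-subgroup $P$ and a $p'$-element $x\in{\bf N}_G(P)\setminus{\bf C}_G(P)$.
\item Then $P_0=[P\langle x\rangle,P\langle x\rangle]$ is a nontrivial $p$-group inside the nilpotent derived subgroup of the proper, solvable, hence supersolvable subgroup $P\langle x\rangle\Phi(G)$.
\item So $P_0$ centralizes the $p'$-group $[\Phi(G),P_0]$, hence centralizes $\Phi(G)$ by coprime action.
\item Finally ${\bf C}_G(\Phi(G))\Phi(G)=G$, which forces $\Phi(G)\le{\bf Z}(G)$.
\end{itemize}
Your idea of using derived subgroups of supersolvable $pd$-subgroups is in this spirit. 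But you need a $p$-element outside $R$ lying in such a derived subgroup, and that is exactly what Frobenius's criterion supplies.
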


As mentioned above, [\citenum{BeltranShao}, Theorem 3.3] precisely determines which nonabelian simple groups  are $\mathcal{U}$-semicritical with respect to $3$, namely the groups ${\rm PSL}_2(2^p)$ for any odd prime $p$. However, establishing a complete classification of $\mathcal{U}$-semicritical  simple groups with respect to an arbitrary odd prime is a significantly more challenging task (see Remark \ref{remarkPSL}). Nevertheless, in this paper,  we establish a criterion for alternating groups  to be $\mathcal{U}$-semicritical for an arbitrary prime and provide a complete list of those sporadic simple groups that are $\mathcal{U}$-semicritical  with respect to some prime. This has been made possible thanks to the recently completed classification of the maximal subgroups of the Monster group [\citenum{Dietrich}].

\begin{thmB}
The alternating group $A_n$ with $n\geq 5$ is $\mathcal{U}$-semicritical with respect to a prime $p$ if and only $p=n$;  $p\neq 11,23 $ and $p$ cannot be written in the form $(q^d-1)/(q-1)$ for any prime power $q$ and for any $d\geq 2$.
\end{thmB}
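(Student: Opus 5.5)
Since $A_n$ is simple and nonabelian, it is not supersolvable, so $A_n$ is $\mathcal{U}$-semicritical with respect to $p$ exactly when $p$ divides $n!/2$ and every proper subgroup of order divisible by $p$ is supersolvable. It suffices to test maximal subgroups. A subgroup of order divisible by $p$ lies in some maximal subgroup, but that maximal subgroup need not have order divisible by $p$ in a way that forces anything, so one has to be careful. In fact every maximal subgroup containing a $p$-subgroup has order divisible by $p$. Hence the condition is equivalent to: every maximal subgroup $M$ of $A_n$ with $p\mid |M|$ is supersolvable.

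\emph{Necessity.} First we show $p\ge n$, and hence $p=n$ since $p\le n$.
\begin{enumerate}[(a)]
\item Suppose $p\le n-1$. Then $p$ divides $|A_{n-1}|$, and $A_{n-1}$ is a point stabilizer, hence a proper subgroup. For $n\ge 6$, $A_{n-1}$ is nonsolvable, which is a contradiction.
\item For $n=5$ and $p\le 4$, we have $p=3$ (or $p=2$, excluded if $p$ is odd). But $A_4$ is not supersolvable. In general $A_4\le A_{n-1}$ also rules out $p\in\{2,3\}$ for any $n\ge 5$. For $p\ge 5$ with $p\le n-1$, $A_{n-1}$ is nonsolvable.
\end{enumerate}
So $p=n$ is prime.

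Next, take a subgroup $M$ of $A_p$ containing a $p$-cycle. By a classical theorem (Burnside), a transitive group of prime degree is either $2$-transitive or solvable, the latter being contained in $\mathrm{AGL}_1(p)\cap A_p=C_p\rtimes C_{(p-1)/2}$, which is supersolvable. The $2$-transitive proper subgroups of $A_p$ of prime degree are classified via CFSG (Guralnick's theorem on subgroups of prime power index in simple groups, or the list of $2$-transitive groups). The nonsolvable ones not containing $A_p$ are:
\begin{enumerate}[(i)]
\item $\mathrm{PSL}_d(q)\le M\le \mathrm{P\Gamma L}_d(q)$ acting on $(q^d-1)/(q-1)=p$ points or hyperplanes;
\item $\mathrm{PSL}_2(11)$ with $p=11$;
\item $M_{11}$ with $p=11$;
\item $M_{23}$ with $p=23$.
\end{enumerate}
Each of these is nonsolvable, except for small degenerate cases: $d=2$ with $q\le 3$ gives $p=3$ or $4$, and $p=3$ is too small. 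If $p$ has such a form, or $p\in\{11,23\}$, we need an element of that family lying in $A_p$ and properly inside it. For $\mathrm{PSL}_d(q)$ this holds because it is perfect, and it is proper when $p\ge 5$. These groups are nonsolvable and of order divisible by $p$, which contradicts the hypothesis. This gives necessity.

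\emph{Sufficiency.} If $p=n$ avoids all these cases, then any proper subgroup $H$ of $A_p$ with $p\mid |H|$ contains a $p$-cycle and is therefore transitive of prime degree. By the list above, $H$ is not $2$-transitive (apart from $A_p$ and $S_p$), so $H$ is solvable and contained in $C_p\rtimes C_{(p-1)/2}$. The latter is metacyclic with cyclic normal Sylow $p$-subgroup and cyclic quotient, hence supersolvable. Subgroups of supersolvable groups are supersolvable, so $H$ is supersolvable.

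The main obstacle is invoking the correct CFSG-dependent classification of transitive groups of prime degree. We need Feit's/Guralnick's list, cleanly stated as: a nonsolvable transitive group of prime degree $p$ is $A_p$, $S_p$, one of $\mathrm{PSL}_2(11)$, $M_{11}$, $M_{23}$, or lies between $\mathrm{PSL}_d(q)$ and $\mathrm{P\Gamma L}_d(q)$ with $p=(q^d-1)/(q-1)$. We must also check that each exception actually embeds in $A_p$, which holds since these groups are perfect.
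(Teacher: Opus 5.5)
For $n\ge 6$ your argument is essentially the paper's. $A_{n-1}$ disposes of every prime $p<n$. For $n=p$, the classification of transitive groups of prime degree gives necessity, via the nonsolvable subgroups $\mathrm{PSL}_d(q)$, $M_{11}$, $M_{23}$, which contain a $p$-cycle and lie in $A_p$ by perfectness. It also gives sufficiency, via $\mathrm{AGL}(1,p)\cap A_p\cong C_p\rtimes C_{(p-1)/2}$.

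The step that fails is your assertion that $\mathrm{PSL}_d(q)$ is a proper subgroup of $A_p$ ``when $p\ge 5$''. For $p=5$ we have $5=(4^2-1)/(4-1)$. But $\mathrm{PSL}_2(4)$, acting on the five points of the projective line over $\mathbb{F}_4$, is all of $A_5$. So you have not exhibited a proper non-supersolvable subgroup of order divisible by $5$, and none exists. The proper subgroups of $A_5$ of order divisible by $5$ are $C_5$ and $D_{10}$, both supersolvable, so $A_5$ \emph{is} $\mathcal{U}$-semicritical with respect to $5$. Hence at $n=5$ the ``only if'' direction of the statement, read literally, is false, and your proof reaches it only through this incorrect properness claim.

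The paper settles $A_5$ directly from its maximal subgroups $D_{10}$, $A_4$, $S_3$, and only afterwards assumes $n>5$. There, properness of $\mathrm{PSL}_d(q)$ in $A_p$ follows because the only simple alternating groups isomorphic to some $\mathrm{PSL}_d(q)$ are $A_5$, $A_6$, $A_8$, and $A_p$ with $p\ge 7$ prime is none of them. The paper's own remark that $5$ satisfies the arithmetic condition is itself inaccurate, since $5=4+1$.

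You should therefore either treat $n=5$ as an explicit exception ($A_5$ is $\mathcal{U}$-semicritical exactly for $p=5$), or equivalently exclude $(q,d)=(4,2)$ from the arithmetic condition. With that amendment, and with properness justified as above for $p\ge 7$, your argument goes through.
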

 We remark that it is an open problem whether there exist finitely
or infinitely many primes $p$ that satisfy the arithmetic conditions of Theorem  B (see \cite{HardyWright}).

\begin{thmC}  The sporadic simple groups that are $\mathcal{U}$-semicritical with respect to some prime $p$ are in the following list.
\begin{itemize}
\item[(1)] $J_1$  is $\mathcal{U}$-semicritical with respect to $19$.
\item[(2)]  $M_{23}$ is $\mathcal{U}$-semicritical with respect to $23$.
\item[(3)]  $J_4$ is $\mathcal{U}$-semicritical with respect to $29$ and  $43$.
\item[(4)]  $Fi_{24}'$ is $\mathcal{U}$-semicritical with respect to $29$.
\item[(5)]  $Ly$ is $\mathcal{U}$-semicritical with respect to $37$ and  $67$.
\item[(6)]  $B$ is $\mathcal{U}$-semicritical with respect to $47$.
\item[(7)]  $M$ is $\mathcal{U}$-semicritical with respect to $59$.
\end{itemize}
\end{thmC}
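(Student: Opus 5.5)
The proof will be a case-by-case verification over the $26$ sporadic simple groups, using the ATLAS lists of maximal subgroups. For the Monster we use the recently completed list [\citenum{Dietrich}]. A nonabelian simple group $S$ is never supersolvable. So $S$ is $\mathcal{U}$-semicritical with respect to $p$ if and only if $p$ divides $|S|$ and every maximal subgroup of $S$ of order divisible by $p$ is supersolvable. The key reduction is the following: every proper $pd$-subgroup lies in some maximal subgroup $M$, and that $M$ is then itself a $pd$-group. Hence it suffices to examine the maximal subgroups containing a Sylow $p$-subgroup. Since subgroups of supersolvable groups are supersolvable, if all such $M$ are supersolvable then so is every proper $pd$-subgroup.

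First we rule out small primes. If $p^2$ divides $|S|$ for a prime $p\ge 5$, or $p\in\{2,3\}$, then some maximal subgroup containing a Sylow $p$-subgroup is visibly nonsolvable in the ATLAS. Typical examples are $p$-local subgroups such as $N_S(P)$ with a nonsolvable complement, or a maximal subgroup of the form $L_2(q)$, an alternating group, or another sporadic group of order divisible by $p$. So we may restrict to primes $p$ dividing $|S|$ exactly once, and in practice to the largest primes. For each such pair $(S,p)$ we read off from the ATLAS the maximal subgroups whose order is divisible by $p$. Typically these are either a Frobenius-type normalizer $p\!:\!k$ with $k\mid p-1$, which is supersolvable since it is metacyclic with cyclic normal Sylow $p$ and abelian quotient, or a simple group such as $L_2(p)$, $L_2(q)$ or $U_3(q)$ containing elements of order $p$, which makes $p$ fail.

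Carrying this out gives the list. For $J_1$ and $p=19$, the only maximal subgroup of order divisible by $19$ is $19\!:\!6$. For $p=11$ we get the maximal subgroup $L_2(11)$, and for $p=7$ we get $2^3\!:\!7\!:\!3$ together with $7\!:\!6$ and $2\times A_5$; the latter has order not divisible by $7$, but $2^3\!:\!7\!:\!3$ is not supersolvable because $7$ acts irreducibly on $2^3$. For $M_{23}$ and $p=23$ the only such maximal subgroup is $23\!:\!11$, while $M_{23}$ fails for $p=11$ because of $M_{11}$ and $L_2(11)$ inside $M_{22}$. For $J_4$ the relevant subgroups are $29\!:\!28$, $43\!:\!14$ and $37\!:\!12$, but $p=37$ fails because $U_3(11)$ contains elements of order $37$; $31$ fails because of $L_2(32)$. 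The remaining cases are $Fi_{24}'$ with $29\!:\!14$, $Ly$ with $37\!:\!18$ and $67\!:\!22$, $B$ with $47\!:\!23$, and $M$ with $59\!:\!29$. For each other prime of these groups we exhibit a nonsupersolvable maximal $pd$-subgroup, for instance $L_2(71)$, $L_2(41)$ and $L_2(59)$ in $M$ for the primes $71$ and $41$. For the remaining $19$ sporadic groups we check that every prime dividing the order lies in some nonsolvable maximal subgroup. For example, in $M_{11}$, $M_{12}$, $M_{22}$ and $M_{24}$ the prime $11$ lies in $L_2(11)$ and the prime $23$ lies in $L_2(23)$.

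The main obstacle is bookkeeping at the large primes. For each candidate prime we must be sure that \emph{every} maximal subgroup of order divisible by $p$ has been identified. For the Monster this requires the complete maximal subgroup list of [\citenum{Dietrich}]. It also requires care with primes such as $41$ and $59$, where the relevant subgroups $L_2(41)$ and $L_2(59)$ were among the last to be settled; Theorem C asserts $M$ is semicritical for $59$, so the case $59$ hinges on the claim that no maximal subgroup other than $59\!:\!29$ has order divisible by $59$. For that prime we would confirm it by combining the list with Sylow counting: since $59$ divides $|M|$ exactly once, any maximal subgroup of order divisible by $59$ must contain a subgroup of order $59$, and we compare this with the possible structures on the list.
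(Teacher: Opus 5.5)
Your overall route is the same as the paper's. You read off maximal subgroups case by case from the ATLAS and the updated lists, exclude a prime by exhibiting one non-supersolvable subgroup of order divisible by it, and confirm each listed pair by checking that the only maximal subgroup of order divisible by $p$ is a metacyclic $p\!:\!k$. Your explicit treatment of $J_1$ via $L_2(11)$, $2^3\!:\!7\!:\!3$ and $19\!:\!6$ is actually more complete than the paper's written proof, which lists $J_1$ among the groups to be checked but never returns to it.

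There is, however, a genuine error exactly at item (7): you list ``$L_2(59)$ in $M$'' among the non-supersolvable subgroups of the Monster. If $M$ had any subgroup isomorphic to $L_2(59)$, maximal or not, it would be a proper non-supersolvable subgroup of order divisible by $59$. Then $M$ would \emph{not} be $\mathcal{U}$-semicritical with respect to $59$. It would also contradict your own use of $59\!:\!29$ as the relevant maximal subgroup: ${\bf N}_M(C_{59})=59\!:\!29$ would coincide with a Borel subgroup of that $L_2(59)$, so it would not be maximal. (Also $|L_2(59)|=2^2\cdot 3\cdot 5\cdot 29\cdot 59$, so it says nothing about the primes $71$ or $41$.)

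What makes (7) true is precisely that $M$ contains no $L_2(59)$. The earlier claim in the literature that it does was wrong and is corrected in \cite{Dietrich}; that is why $59\!:\!29$ appears there as a maximal subgroup, and the only one of order divisible by $59$. Your proposed Sylow-counting check cannot replace this fact. It only tells you that a maximal subgroup of order divisible by $59$ contains a $C_{59}$, which a hypothetical $L_2(59)$ would do equally well. The case $p=59$ rests entirely on the completed classification, and you need to state $L_2(59)\not\le M$, not the opposite.

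Two smaller inaccuracies should also be fixed. First, your claim that for $p\in\{2,3\}$ some maximal subgroup containing a Sylow $p$-subgroup is nonsolvable is false. In $J_3$, the only maximal subgroup of order divisible by $3^5$ is the $\{2,3\}$-group $3^2.3^{1+2}\!:\!8$. In $Th$, the only maximal subgroups of order divisible by $3^{10}$ are the $\{2,3\}$-groups $3.[3^8].2S_4$ and $3^2.[3^7].2S_4$. This does not affect the conclusion, because to exclude $p$ you only need some non-supersolvable subgroup of order divisible by $p$, for instance $L_2(16)\!:\!2$ in $J_3$ and $(3\times G_2(3))\!:\!2$ in $Th$. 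But the Sylow condition should be dropped. Second, your reduction to ``maximal subgroups containing a Sylow $p$-subgroup'' is not valid in general: semicriticality requires \emph{every} maximal subgroup of order divisible by $p$ to be supersolvable. The reduction works here only because every prime in the final list divides the group order exactly once.
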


In Section 4, we  examine the structure of solvable $\mathcal{U}$-semicritical groups with respect to an odd prime.  As mentioned  earlier, $\mathcal{U}$-semicritical groups with respect to $2$ are solvable and, in fact, are $2$-nilpotent with Sylow $2$-subgroups of order $2$ [\citenum{Meng}, Theorem 1.2].  Note that supersolvable groups and $\mathcal U$-critical groups both have $p$-length $1$ for every prime divisor $p$ of their order. This  follows immediately  from the fact that every supersolvable group possesses a Sylow tower, and from the classification of minimal non-supersolvable groups (see, for instance, \cite{BallesterEsteban}), respectively. Theorem D  shows that solvable $\mathcal{U}$-semicritical groups with respect to an odd prime $p$ also  have $p$-length $1$. In fact, our hypotheses are more restrictive than those of [\citenum{Moreto}, Theorem C] since we focus on a single prime divisor of the group order. However, we are able to obtain further structural properties. We  denote by $\pi(G)$ the set of prime numbers that divide the order of a group $G$.

\begin{thmD}
 Let $p$ an odd prime. Suppose that $G$ is solvable and $\mathcal{U}$-semicritical with respect to $p$. Then $|\pi(G)|\leq 4$, the $p$-length of $G$  is $1$,  and one of the  following  possibilities occurs:
  \begin{itemize}
 \item[(1)]  $G$ is $\mathcal{U}$-critical (of order divisible by $p$);
 \item[(2)]  Assume ${\bf O}_{p}(G)>1$. Then  $G={\bf O}_p(G)M$, where ${\bf O}_p(G)$ is a minimal normal subgroup of $G$, and $M$ is a maximal $p'$-subgroup of $G$ that is  $\mathcal{U}$-critical. If, in addition, ${\bf O}_{p'}(G)=1$,  then $M$  is $\mathcal{A}$-critical,  $|G|_p\geq p^2$ and $|\pi(G)|=3$.
   \item[(3)] Assume  ${\bf O}_{p}(G)=1$. Then $|G|_p=p$ and $G=PM$, where $P$ is a Sylow $p$-subgroup of $G$ and $M$ is a maximal $p'$-subgroup of $G$ that is not supersolvable (not necessarily $\mathcal{U}$-critical).
 \end{itemize}

\end{thmD}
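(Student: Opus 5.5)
We argue by cases on ${\bf O}_p(G)$, taking $G$ solvable, $\mathcal{U}$-semicritical with respect to $p$, and not $\mathcal{U}$-critical, since otherwise (1) holds. Then some proper subgroup of $G$ is not supersolvable, and it must be a $p'$-group. We repeatedly use Doerk's description of minimal non-supersolvable groups: they have a normal Sylow subgroup $Q$ with $Q/\Phi(Q)$ a chief factor of rank $>1$, at most three prime divisors, and $p$-length $1$. We also use that supersolvable groups have a Sylow tower.

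\textbf{Step 1 (reduction to a maximal $p'$-subgroup).} Let $M$ be a proper non-supersolvable subgroup, chosen maximal among $p'$-subgroups. Let $P\in{\rm Syl}_p(G)$. For each maximal subgroup $H$ of $G$, either $p\mid |H|$, in which case $H$ is supersolvable, or $H$ is a $p'$-subgroup. By solvability, $G$ has a Hall $p'$-subgroup, and $M$ is contained in one, $K$. If $K<G$ then $K\cdot$(any $p$-element) is either $G$ or a supersolvable $pd$-subgroup. We aim to show $G=PM$ with $M=K$ a Hall $p'$-subgroup. If $PK'$ were a proper subgroup for some intermediate $K'$, it would be supersolvable, and this contradicts $M\le K'$ being non-supersolvable. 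Hence $G=PK$ for every choice, and every proper subgroup containing $P$ is supersolvable. The $p$-length-$1$ claim will follow once we show $P{\bf O}_{p'}(G)\trianglelefteq G$ or $P\trianglelefteq G$ modulo ${\bf O}_{p'}(G)$. To get this, apply the hypothesis to ${\bf O}_{p'}(G)P$: if it is proper it is supersolvable, hence $p$-nilpotent modulo considerations. The main obstacle is showing that ${\bf O}_{p',p}(G)$ contains $P$. I would use induction on $|G|$, since quotients $G/N$ with $N$ a $p'$-normal subgroup are either supersolvable or again semicritical. The base case ${\bf O}_{p'}(G)=1$ is handled in Steps 2 and 3.

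\textbf{Step 2 (${\bf O}_p(G)>1$).} Let $N$ be a minimal normal subgroup contained in ${\bf O}_p(G)$. Then $NM$ is a $pd$-subgroup and is non-supersolvable, since $M$ is not. Hence $NM=G$, so $N={\bf O}_p(G)$ is a Sylow $p$-subgroup, $M$ is a complement, and $M$ is maximal. Every proper subgroup of $M$ extends by $N$ to a proper $pd$-subgroup, which is supersolvable. Hence every proper subgroup of $M$ is supersolvable, that is, $M$ is $\mathcal{U}$-critical. If ${\bf O}_{p'}(G)=1$, then $M$ acts faithfully and irreducibly on $N$. For every proper $L<M$, the group $NL$ is supersolvable, so $L$ acts on $N$ with all chief factors cyclic. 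Faithfulness then forces $L$ to be abelian of exponent dividing $p-1$ modulo kernels. So $M$ is $\mathcal{A}$-critical, a Miller--Moreno group with $|\pi(M)|\le2$. The case $|\pi(M)|=1$ (a $q$-group) is excluded because $M$ is $\mathcal{U}$-critical. Thus $|\pi(G)|=3$. If $|N|=p$, then $G$ would be supersolvable modulo $M$, giving a contradiction; hence $|G|_p\ge p^2$.

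\textbf{Step 3 (${\bf O}_p(G)=1$).} By Step 1, modulo the $p'$-part, $P$ normalizes a chain in which it acts. Take $F={\bf F}(G)$, a $p'$-group. Then $FP$ is a proper $pd$-subgroup, or else $G=FP$ and $M=F$ would be nilpotent. So $FP$ is supersolvable, and by the Sylow tower with ${\bf O}_p=1$, $P$ acts faithfully on $F$. Supersolvability of $FP$ together with faithful action on the Frattini quotients gives that $P$ is abelian of exponent dividing $q-1$. If $|P|>p$, then a maximal subgroup $P_0$ of $P$ with $P_0M$ proper (or $P_0{\bf C}$) yields a contradiction via $P_0$ acting on the non-supersolvable $M$. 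I would instead argue that $P_1M$ for $P_1$ of order $p$ is a proper $pd$-subgroup containing $M$, hence supersolvable, a contradiction unless $P_1M=G$. Hence $|P|=p$. The bound $|\pi(G)|\le4$ follows from $|\pi(M)|\le 3$: $M$'s relevant non-supersolvable part is controlled by a $\mathcal{U}$-critical subgroup, and $M$ has no $p$-part. Combining gives at most four primes.
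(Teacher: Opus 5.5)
Your Step 2 is fine. Your diagonalization argument for $\mathcal{A}$-criticality is a valid replacement for the paper's derived-subgroup argument: $NL$ supersolvable plus Maschke makes $N$ a sum of one-dimensional $L$-modules, so a faithfully acting $L$ is abelian. Your justification of $|G|_p\ge p^2$ is garbled, though. The right reason is that $M$ acts faithfully on $N$, so $|N|=p$ would embed $M$ in ${\rm Aut}(C_p)$ and make it abelian. The induction through $G/{\bf O}_{p'}(G)$ also does give $p$-length $1$.

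\textbf{Gap 1: $|P|=p$ in Step 3.} For an arbitrary $P_1\le P$ of order $p$, the product $P_1M$ need not be a subgroup. Your sketch never produces a $P_1$ that permutes with $M$. The paper supplies exactly this:
\begin{itemize}
\item[(i)] Since ${\bf O}_p(G)=1$, ${\bf N}_G(P)$ is a proper $pd$-subgroup, hence supersolvable. So it has a minimal normal subgroup $P_0\le P$ of order $p$.
\item[(ii)] The Frattini argument applied to ${\bf O}_{p',p}(G)={\bf O}_{p'}(G)P\trianglelefteq G$ gives $G={\bf O}_{p'}(G){\bf N}_G(P)$.
\item[(iii)] Dedekind's rule then gives $M={\bf O}_{p'}(G){\bf N}_M(P)$.
\item[(iv)] As ${\bf N}_M(P)$ normalizes $P_0$, the set $P_0M={\bf O}_{p'}(G)\,P_0{\bf N}_M(P)$ is a subgroup.
\end{itemize}
Only now does your contradiction go through: $P_0M=G$, so $|P|=p$.

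\textbf{Gap 2: the bound $|\pi(G)|\le 4$.} It is fine in cases (1) and (2), where $G$ or $M$ is $\mathcal U$-critical and Doerk's structure theorem bounds the number of primes. In case (3), however, $M$ need not be $\mathcal U$-critical. That a non-supersolvable $M$ contains some $\mathcal U$-critical subgroup with at most three prime divisors gives no bound on $|\pi(M)|$; think of $A_4\times C_{r_1}\times\cdots\times C_{r_k}$.

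The missing idea is Doerk's index criterion: if four supersolvable subgroups of $G$ have pairwise coprime indices, then $G$ is supersolvable. For each prime $q\neq p$ in $\pi(G)$, a Hall $q'$-subgroup is a proper $pd$-subgroup, hence supersolvable, and has $q$-power index. So $|\pi(G)|\ge 5$ would force $G$ to be supersolvable. This is how the paper proves the bound, uniformly and before any case distinction.
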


 In the  final section, we will  provide several examples to show that all the cases listed in Theorem D actually occur.

\section{Proof of Theorem A}

As mentioned in the Introduction, our strategy for proving Theorem A involves, among other things, some properties of coprime actions on simple groups.   We  refer the reader to [\citenum{KurzweilStellmacher}, Chapter 8] for a detailed presentation of the standard results on coprime action. We recall here only that if a  finite group $A$ acts coprimely on a finite group $G$, then, for every prime $p$ dividing the order of $G$, there exists an $A$-invariant Sylow $p$-subgroup in $G$. For convenience, we also state two additional results whose proofs are elementary.

\begin{lemma}\textnormal{[\citenum{KurzweilStellmacher}, 8.2.2]} \label{Lemma 1} Let $A$ be a group  acting  on a group $G$. Let $N$ be an $A$-invariant normal subgroup of $G$. Suppose that the action of $A$ on $N$ is coprime. Then ${\bf C}_{G/N}(A) = {\bf C}_G(A)N/N.$
\end{lemma}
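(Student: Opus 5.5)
The inclusion ${\bf C}_G(A)N/N \subseteq {\bf C}_{G/N}(A)$ is immediate: if $g$ is fixed by every element of $A$, then so is the coset $gN$. The whole argument therefore reduces to the reverse inclusion. Concretely, given a coset $gN$ fixed by $A$, I will show that it contains an element actually fixed by $A$. My plan is the standard coset-stabilizer argument combined with the Schur--Zassenhaus theorem, applied in the semidirect product.

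Let $gN$ be $A$-invariant, so $(gN)^a=gN$ for all $a\in A$. Form the semidirect product $S=G\rtimes A$, in which $N$ is normal because it is $A$-invariant and normal in $G$. Set $H=NA\leq S$. The subgroup $H$ acts on the coset $gN$ by $x\mapsto x^h$: elements of $N$ act by conjugation, and $N$ is normal, so $N$-conjugation maps $gN$ to itself; elements of $A$ map $gN$ to itself by hypothesis. Thus the stabilizer in $H$ of the set $gN$ is all of $H$.

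The key step is now to compare two complements. First, $N$ acts transitively on $gN$? Not quite by conjugation; instead I will use the following instead. Consider the subgroup $K=\langle g\rangle$-free version: let $T=\{h\in H : \text{ }\}$. A cleaner route is to note that $A^g\le gNA g^{-1}$. In the coset computation, $a^g=g^{-1}ag = a\cdot [a,g]$, and $[a,g]=(g^{-1})^a g\in N$ because $g^a\in gN$. Hence $A^g\le NA=H$. So $A$ and $A^g$ are both complements to the normal subgroup $N$ in $H$, and $\gcd(|N|,|A|)=1$.

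By the Schur--Zassenhaus theorem (conjugacy part; one of $N$, $A$ is solvable by Feit--Thompson, or the standard coprime-action hypotheses in [\citenum{KurzweilStellmacher}, Chapter 8] already include this), there is $n\in N$ with $A^{g}=A^{n}$. Then $gn^{-1}$ normalizes $A$ in $S$. For $a\in A$, the commutator $[a,gn^{-1}]$ lies in $A$ because $gn^{-1}$ normalizes $A$, and it lies in $G$ because $G\trianglelefteq S$. Since $A\cap G=1$, this commutator is trivial. Hence $gn^{-1}\in{\bf C}_G(A)$, and therefore $gN=gn^{-1}N\in{\bf C}_G(A)N/N$.

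The main obstacle is the conjugacy part of Schur--Zassenhaus, which in full generality requires that $N$ or $A$ be solvable. Since coprimality forces one of them to have odd order, this follows from Feit--Thompson, and it is the standard convention in the cited chapter. All other steps are routine verifications in the semidirect product.
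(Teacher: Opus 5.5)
Your argument is correct and is essentially the standard proof behind the cited result of Kurzweil and Stellmacher (8.2.2); the paper gives no proof of its own. The steps are: $A$ and $A^g$ are complements to $N$ in $NA$; Schur--Zassenhaus conjugates them by some $n\in N$; and then $gn^{-1}$ centralizes $A$, since $[A,gn^{-1}]\le A\cap G=1$ — all using only $(|A|,|N|)=1$, as in the paper's formulation. You should delete the abandoned fragments (the action of $H$ on $gN$, the unfinished set $T$, and the irrelevant claim $A^g\le gNAg^{-1}$), which play no role in the proof.
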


\begin{lemma} %\textnormal{[\citenum{KurzweilStellmacher}, 8.1.6]}
\label{Lemma 2} Suppose that a finite group $A$ acts on a finite group $H = H_1 \times \cdots \times H_n$ such that $H_i^a = H_i \quad \text{for all } a \in A \text{ and } i \in \{1,\ldots,n\}.$ Suppose further that $A$ acts transitively on $\{H_1,\ldots,H_n\}$. Let $H_1 = H$ and $B = {\bf N}_A(H_1).$ Then ${\bf C}_G(A) \cong {\bf C}_H(B).$
\end{lemma}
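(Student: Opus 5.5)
The statement as printed contains evident typos, and I will prove the standard version from [\citenum{KurzweilStellmacher}, 8.1.6]. Specifically, $A$ \emph{permutes} the factors, meaning each $H_i^a$ is again some $H_j$; it is this transitive permutation action that the statement intends. Also $H=H_1\times\cdots\times H_n$, and the conclusion should read ${\bf C}_H(A)\cong {\bf C}_{H_1}(B)$ with $B={\bf N}_A(H_1)$. The plan is to build an explicit isomorphism from ${\bf C}_{H_1}(B)$ onto ${\bf C}_H(A)$ by ``spreading'' an element of $H_1$ over all factors along a transversal. The inverse map will be projection onto the first coordinate.

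First, by transitivity choose $a_1=1,a_2,\dots,a_n\in A$ with $H_1^{a_i}=H_i$. Then $\{a_i\}$ is a right transversal of $B$ in $A$, and $H_1^{ba_i}=H_i$ for all $b\in B$. Next, for $a\in A$ and each $i$, write $a_ia=b_ia_{\sigma(i)}$ with $b_i\in B$, where $\sigma$ is the permutation of $\{1,\dots,n\}$ induced by $a$ (indeed $H_i^a=H_1^{a_ia}=H_{\sigma(i)}$). Define
\[
\varphi\colon {\bf C}_{H_1}(B)\to H,\qquad x\mapsto \prod_{i=1}^n x^{a_i}.
\]
Since $x^{a_i}\in H_i$ and distinct factors commute elementwise, the order of the product does not matter. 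The map $\varphi$ is a homomorphism because each $x\mapsto x^{a_i}$ is one. It is injective because the $H_1$-component of $\varphi(x)$ is $x$.

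To see that $\varphi(x)\in{\bf C}_H(A)$, compute each factor: $(x^{a_i})^a=x^{b_ia_{\sigma(i)}}=x^{a_{\sigma(i)}}$, using $x^{b_i}=x$. Thus $a$ merely permutes the factors of the product, so $\varphi(x)^a=\varphi(x)$.

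For surjectivity, take $y=y_1\cdots y_n\in{\bf C}_H(A)$ with $y_i\in H_i$. For $b\in B$ we have $H_1^b=H_1$, and $b$ permutes the remaining $H_j$ among themselves. Comparing $H_1$-components in $y^b=y$ gives $y_1^b=y_1$, so $y_1\in{\bf C}_{H_1}(B)$. Similarly, $y^{a_i}=y$ and $H_1^{a_i}=H_i$ show that the $H_i$-component of $y$ is $y_1^{a_i}$, that is, $y_i=y_1^{a_i}$. Hence $y=\varphi(y_1)$.

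There is no real obstacle here. The only point needing care is the bookkeeping of the decomposition $a_ia=b_ia_{\sigma(i)}$, and the correct reading of the (misprinted) hypotheses.
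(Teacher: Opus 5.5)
Your proof is correct. Your reading of the misprinted statement is the intended one: $A$ permutes $\{H_1,\dots,H_n\}$ transitively, and the conclusion is ${\bf C}_H(A)\cong{\bf C}_{H_1}(B)$. This matches how the lemma is applied to $\langle x\rangle$ acting on $S_1\times\cdots\times S_n$ in the proof of Theorem A.

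The paper gives no argument beyond citing [\citenum{KurzweilStellmacher}, 8.1.6]. Your map $x\mapsto\prod_i x^{a_i}$ along a right transversal of $B$ in $A$, with inverse the projection onto $H_1$, is the standard proof of that result, so you have supplied the details the paper omits.
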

\begin{proof}
    If follows  from [\citenum{KurzweilStellmacher}, 8.1.6]. 
\end{proof}

A much deeper result on coprime action, which depends on the Classification of Finite Simple Groups,   is required. Using the limited list of solvable Lie type  groups, one can derive the set of simple groups of Lie type that admit a nontrivial coprime automorphism group whose fixed-point subgroup is solvable.  The next theorem is well known and can be deduced, for instance,  from [\citenum{GuralnickShumyatsky}, Lemma 2.7 and Corollary 2.8].

\begin{theorem}\label{T2.3}
Let $G$ be a simple nonabelian group and $A$ an automorphism group of $G$ such that $(|G|,|A|)=1$. Then

\begin{itemize}
\item[(a)] If $G$ is alternating or sporadic, then $A=1$.
\item[(b)] If $G$ is a group of Lie type and ${\bf C}_G(A)$ is solvable, then $A$ is the full group of field automorphisms. Moreover, one of the following cases holds;

\begin{itemize}

\item[(1)] $G\cong {\rm PSL}(2,2^n)$, $n\geq 2$ and ${\bf C}_G(A)\cong \mathrm{PSL}_2(2)\cong S_3$;
\item[(2)] $G\cong {\rm PSL}(2,3^n)$, $n\geq 2$ and ${\bf C}_G(A)\cong \mathrm{PSL}_2(3)\cong A_4$;
\item[(3)] $G\cong {\rm PSU}(3,3^{2n})$, $n\geq 2$ and ${\bf C}_G(A)\cong\mathrm{PSU}_3(2)\cong (C_3\times C_3)\rtimes Q_8$.;
\item[(4)] $G\cong \mathrm{Sz}(2^{2n+1})$, $n\geq 1$ and ${\bf C}_G(A)\cong \mathrm{Sz}(2)$, the Frobenius group of order $20$.
\end{itemize}
\end{itemize}
\end{theorem}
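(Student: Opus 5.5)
This statement is not proved from scratch in the paper. It is presented as a consequence of the classification of finite simple groups, via [\citenum{GuralnickShumyatsky}, Lemma 2.7 and Corollary 2.8]. So my plan is to reduce to known facts about coprime automorphisms of simple groups rather than to give a self-contained argument.

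For part (a), I would first show that a coprime automorphism of a simple group $G$ has order coprime to $|G|$, hence odd because $|G|$ is even. By the classification, ${\rm Aut}(G)/{\rm Inn}(G)={\rm Out}(G)$ is well understood.
\begin{itemize}
\item If $G$ is sporadic, $|{\rm Out}(G)|\le 2$.
\item If $G=A_n$ with $n\neq 6$, then ${\rm Out}(G)$ has order $2$; for $n=6$ it has order $4$.
\end{itemize}
Now $A\cap {\rm Inn}(G)=1$, because a nontrivial inner automorphism has order dividing $|G/{\bf Z}(G)|=|G|$. Hence $A$ embeds in ${\rm Out}(G)$, which is a $2$-group in these cases. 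Since $|A|$ is odd, $A=1$.

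For part (b), let $G$ be of Lie type over $\mathbb{F}_q$ with $q=r^f$. Any automorphism is a product of inner, diagonal, graph and field automorphisms. The orders of the diagonal automorphism group and of the graph automorphism group divide $|G|$; the only exotic case is the order-$3$ triality of $D_4$, where $3$ divides $|G|$. A coprime $A$ therefore embeds in the cyclic group of field automorphisms. By Lang--Steinberg, a coprime field automorphism group $A$ of order $m$ with $m\mid f$ has centraliser ${\bf C}_G(A)=G(r^{f/m})$, the group of the same type over the subfield; here $A$ acts as the full Galois group of $\mathbb{F}_q/\mathbb{F}_{q_0}$ with $q_0=r^{f/m}$.

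Next I would use the list of solvable groups of Lie type: $A_1(2)$, $A_1(3)$, ${}^2A_2(2)$ and ${}^2B_2(2)$, together with $G(q_0)$ being nonsolvable in all other cases. ${\bf C}_G(A)$ is solvable only when $q_0$ is the prime field $r\in\{2,3\}$, which forces $A$ to be the full field automorphism group. This yields cases (1)--(4). The condition that $|A|=f$ be coprime to $|G|$ gives the stated ranges of $n$.

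The main obstacle is verifying ${\bf C}_G(A)\cong G(q_0)$ in the simple (projective) group rather than in the simply connected group. I would use Lemma \ref{Lemma 1} with $N$ the centre, whose order divides $|G|$ and is therefore coprime to $|A|$, to pass the centraliser through the quotient. Beyond that, I would cite the references rather than reprove them.
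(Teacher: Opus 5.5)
Your proposal follows the paper's route. The paper gives no argument of its own: it deduces the theorem from the CFSG-based results in [\citenum{GuralnickShumyatsky}, Lemma 2.7 and Corollary 2.8] together with the short list of solvable groups of Lie type, which is the reduction you sketch, and your part (a) is complete as written. In (b), knowing that the image of $A$ in ${\rm Out}(G)$ lies in the field-automorphism part only lets you compute ${\bf C}_G(A)$ once you note that $A$ is ${\rm Aut}(G)$-conjugate to a standard group of field automorphisms, which follows from Schur--Zassenhaus in ${\rm Inndiag}(G)A$ since $|A|$ is coprime to $|{\rm Inndiag}(G)|$ and $A$ is cyclic; also, your argument yields ${\rm PSU}(3,2^n)$ in case (3), so the printed $3^{2n}$ there appears to be a misprint.
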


Throughout the paper, we  utilize the maximal subgroups of certain projective
special linear groups. Those are classified, for example, in \cite{BHR}, where a proof of the next theorem  can be found.

\begin{theorem}\textnormal{\cite{BHR}} \label{T2.4}
Let $q=p^e >3$ be a prime power. The maximal subgroups of $\mathrm{PSL}(2,q)$ are given in the following list:
\begin{itemize}
    \item[(a)] dihedral groups of order $q-1$ for $q\ge 13$ odd and $2(q-1)$ for $q$ even; each stabilizes a pair of points (a hyperbolic quadric).
    \item[(b)] dihedral groups of order $q+1$ for $q\ne 7,9$ odd and $2(q+1)$ for $q$ even; each stabilizes a pair of imaginary points (points of $\mathrm{PG}(1,q^2)$ not lying in $\mathrm{PG}(1,q)$).
    \item[(c)] a group of order $q(q-1)/2$ for $q$ odd and $q(q-1)$ for $q$ even; each stabilizes a point.
    \item[(d)] $\mathrm{PSL}(2, q_0)$, where $q$ is an odd prime power of $q_0$ for $q$ odd, or a prime power of $q_0$ for $q$ even; each stabilizes a sub-line.
    \item[(e)] $\mathrm{PGL}(2,q_0)$, where $q=q_0^2$ for $q$ odd; each stabilizes a sub-line.
    \item[(f)] $S_4$ when $q\equiv \pm 1 \pmod{8}$, with either $q$ prime, or $q=p^2$ and $3<p\equiv \pm 3 \pmod{8}$.
    \item[(g)] $A_4$ when $q\equiv \pm 3 \pmod{8}$, with $q>3$ prime.
    \item[(h)] $A_5$ when $q\equiv \pm 1 \pmod{10}$, with either $q$ prime, or $q=p^2$ and $p\equiv \pm 3 \pmod{10}$.
\end{itemize}
\end{theorem}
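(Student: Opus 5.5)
This is Dickson's classical description of the subgroups of $\mathrm{PSL}(2,q)$, recorded in \cite{BHR}. I would prove it in two stages: first classify all subgroups $H$ of $G=\mathrm{PSL}(2,q)$ up to isomorphism and conjugacy, then decide which of them are maximal. Throughout I use the natural action of $G$ on the projective line $\mathrm{PG}(1,q)$ of $q+1$ points, and the standard facts about $G$:
\begin{itemize}
\item a point stabilizer is the Borel subgroup $E_q\rtimes C_{(q-1)/d}$, with $d=(2,q-1)$;
\item every nontrivial element is either unipotent (of order $p$), or lies in a split torus $C_{(q-1)/d}$ fixing two points, or lies in a nonsplit torus $C_{(q+1)/d}$ fixing two conjugate points of $\mathrm{PG}(1,q^2)$;
\item distinct Sylow $p$-subgroups intersect trivially, and distinct tori intersect trivially;
\item the normalizers of the tori are dihedral of orders $2(q\mp1)/d$.
\end{itemize}

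\textbf{Subgroups of order divisible by $p$.} Let $H$ be such a subgroup. If $H$ has a normal Sylow $p$-subgroup, then $H$ fixes the unique point fixed by that $p$-subgroup, so $H$ lies in a Borel subgroup; this gives case (c). Otherwise $H$ contains at least two Sylow $p$-subgroups, and hence $H$ contains two unipotent subgroups fixing different points. After conjugating, these are lower and upper unitriangular matrices with entries in additive subgroups of $\mathbb{F}_q$. Dickson's argument then shows that the entries generate a subfield $\mathbb{F}_{q_0}$, and that $H$ is either $\mathrm{PSL}(2,q_0)$ or $\mathrm{PGL}(2,q_0)$ (the latter inside $\mathrm{PSL}(2,q)$ only when $q$ is an even power of $q_0$, which for the maximal case forces $q=q_0^2$). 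There are small exceptions, for $p=2,3$ with $q_0=4$ or $9$, where $H\cong A_5$.

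\textbf{Subgroups of $p'$-order.} Here every nontrivial element of $H$ lies in a unique torus. Intersecting these tori with $H$ partitions $H\setminus\{1\}$ into cyclic pieces $C_i$. Counting elements over the $H$-conjugacy classes of maximal pieces gives
\[
|H|-1=\sum_i \frac{|H|}{|{\bf N}_H(C_i)|}\bigl(|C_i|-1\bigr),
\]
with $|{\bf N}_H(C_i)|\in\{|C_i|,\,2|C_i|\}$. Solving this equation as in the finite subgroups of $\mathrm{SO}(3)$ yields only the possibilities
\[
H\ \text{cyclic},\quad H\ \text{dihedral},\quad A_4,\quad S_4,\quad A_5 .
\]
I expect this counting and the generation argument of the previous stage to be the technical heart of the proof.

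\textbf{Maximality.} I would go through each type and determine when it is maximal.
\begin{itemize}
\item Cyclic groups are never maximal, since each lies in a torus normalizer; the torus normalizers themselves give (a) and (b).
\item For small $q$, the dihedral groups are absorbed: $D_{q-1}$ lies in $A_4$, $S_4$ or $A_5$ when $q\in\{5,7,9,11\}$, and $D_{q+1}$ does so when $q\in\{7,9\}$. This produces the stated exceptions.
\item $S_4$ exists iff $G$ has elements of order $4$, i.e. $q\equiv\pm1\pmod 8$. When it exists, $A_4$ sits inside it, so $A_4$ is maximal only when $q\equiv\pm3\pmod 8$.
\item $A_5$ exists iff $q\equiv\pm1\pmod{10}$ or $q=p^2$ with $p\equiv\pm 3\pmod{10}$, i.e. iff $\sqrt5$ and a primitive fifth root of unity are available. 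When the groups $A_4,S_4,A_5$ already lie in $\mathrm{PSL}(2,p)$ or $\mathrm{PGL}(2,p)$ for a proper subfield, they are not maximal. This produces the restrictions ``$q$ prime, or $q=p^2$ with the stated congruence'' in (f), (g), (h).
\item For subfield subgroups, $\mathrm{PSL}(2,q_0)$ lies in $\mathrm{PSL}(2,q_1)$ whenever $\mathbb{F}_{q_0}\subset\mathbb{F}_{q_1}\subsetneq\mathbb{F}_q$. So maximality forces $q=q_0^r$ with $r$ prime, and for odd $q$ with $r=2$ the subgroup is replaced by $\mathrm{PGL}(2,q_0)$. This gives (d) and (e).
\end{itemize}
Finally, the geometric descriptions (stabilizer of a point, of a pair of points, of a pair of imaginary points, or of a subline) follow from identifying the fixed structures of the tori and of the subfield subgroups on $\mathrm{PG}(1,q)$.
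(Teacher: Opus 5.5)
The paper gives no proof of this statement; it simply quotes \cite{BHR}. Your plan is the standard way to derive such a table: classify all subgroups via Dickson, then run an overgroup check. The two steps you call the technical heart are only named, which is no worse than the paper's bare citation. Two other steps, however, fail as written.

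First, your description of subgroups of order divisible by $p$ is wrong in characteristic $2$. For $q$ even, take a dihedral group $D_{2n}$ with $n>1$ odd and $n\mid q\pm1$. It has even order (its involutions are transvections) and $n$ Sylow $2$-subgroups. Yet it lies in no Borel subgroup and is not a subfield group unless $n=3$. For example, $D_{14}<\mathrm{PSL}(2,8)$ is generated by two transvections whose entries generate $\mathbb{F}_8$, but they generate only $D_{14}$. So "the entries generate a subfield and $H$ is $\mathrm{PSL}(2,q_0)$ or $\mathrm{PGL}(2,q_0)$" is false here. Dickson's dichotomy for non-$p$-closed $H$ has two extra alternatives: ``$p=2$ and $H$ dihedral of order $2n$, $n$ odd'' and ``$p=3$ and $H\cong A_5$''. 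Without the dihedral case, the even-$q$ groups in (a) and (b) appear in neither of your two classification stages. Your list of candidate overgroups in even characteristic is therefore incomplete.

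Second, and more seriously, the maximality step omits containments that matter. Your assertion that it ``produces'' the restrictions in (f) and (g) is false: (d), (f) and (g) as printed here are not correct, and your own criteria contradict them.

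\emph{The $S_4$ case.} $\mathrm{PGL}(2,p)\cong\mathrm{SO}(3,p)$ contains $S_4$ (the rotation group of the cube) for every odd $p$, and $\mathrm{PGL}(2,p)\le\mathrm{PSL}(2,p^2)$. Hence no $S_4$ is maximal when $q=p^2$. In $\mathrm{PSL}(2,25)$, for instance, every $S_4$ lies in some $\mathrm{PGL}(2,5)\cong S_5$.

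\emph{The $A_4$ case.} You test $A_4$ only against $S_4$, but $A_4<A_5$ whenever $q\equiv\pm1\pmod{10}$. So for the primes $q=11,19,29$, all $\equiv\pm3\pmod 8$, there is no maximal $A_4$. The maximal subgroups of $\mathrm{PSL}(2,11)$ are $A_5$, $A_5$, $11{:}5$ and $D_{12}$.

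\emph{The subfield case.} For $q$ even, (d) needs $q_0\neq 2$. Indeed $\mathrm{PSL}(2,2)\cong S_3$ normalizes the unique torus containing its element of order $3$. Hence it lies in $D_{2(q+1)}$ or $D_{2(q-1)}$, properly once $q\ge 8$; for example $S_3<D_{18}<\mathrm{PSL}(2,8)$.

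Carried out correctly, your approach yields the list actually recorded in \cite{BHR}. There $S_4$ occurs only for $q=p\equiv\pm1\pmod 8$, and $A_4$ only for $q=p\equiv\pm3\pmod 8$ with $q\not\equiv\pm1\pmod{10}$. Also $q_0\neq 2$ in (d). You need to check explicitly every inclusion among $A_4$, $S_4$, $A_5$, and between these and the subfield subgroups, rather than asserting that the stated conditions fall out. As it stands, the proposal claims to prove assertions that are false.
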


\medskip
We are ready to demonstrate Theorem A.

\begin{proof}[Proof of Theorem A]

We start with the direct implication. Let $S(G)$ denote the solvable radical of $G$ and set $\overline{G}=G/S(G)$. We first show that $S(G)\le {\bf O}_{p'}(G).$  Assume,  to the contrary, that $p$ divides $|S(G)|$.  Let $\overline{M}$ be a proper subgroup of $\overline{G}$ and let $M$ be its full preimage in $G$. Then $p$ divides $|M|$, so by hypothesis $M\in\mathcal U$, and therefore $\overline{M}\in\mathcal U$. Thus, every proper subgroup of $\overline{G}$ is supersolvable. Consequently, $\overline{G}$ is supersolvable or $\mathcal{U}$-critical; so in either case $\overline{G}$ is solvable, contradicting the non-solvability of $G$. Thus $S(G)\le {\bf O}_{p'}(G)$, as desired. 

\medskip
We now prove that $\Phi(G)=S(G)$. Obviously, $\Phi(G)\le S(G)$. Suppose,
for the sake of contradiction, that $\Phi(G)<S(G)$. Then there exists a maximal subgroup $M$ of $G$ such that $S(G)\nleq M$, so $G=MS(G)$. We know that $p$ does not divide $|S(G)|$, while $p\in \pi(G)$, so $p$ divides $|M|$. By the $\mathcal U$-semicritical assumption, $M\in\mathcal U$. Then the quotient $G/S(G)\cong M/(M\cap S(G))$ is supersolvable, against the non-solvability of $G$. Combining this with the result obtained earlier, we conclude $\Phi(G)=S(G)\le {\bf O}_{p'}(G).$ At this point, we distinguish two cases: whether or not the latter containment is proper.

\medskip
We  will first assume that $S(G)<{\bf O}_{p'}(G)$ and seek a contradiction. Choose a minimal normal subgroup $\overline{L}$ of $\overline{G}$ such that $\overline{L}\le \overline{{\bf O}_{p'}(G)}$ where $L$ is the full preimage of $\overline{L}$ in $G$. Then $S(G)<L\unlhd G$. We can write  $\overline{L}= S_1\times \cdots \times S_n$ for some $n\ge 1$, where $S_i$ is a nonabelian simple group for each $i$ and all factors are isomorphic. Choose an element $x\in G$ of order $p$, which necessarily lies outside $L$, and consider the subgroup $\langle x\rangle L$ of $G$. If $\langle x\rangle L<G$, then  the hypothesis would lead to $L \in\mathcal U$, a contradiction. Thus $\langle x\rangle L=G$. Now, the $p$-group $\langle x\rangle$ acts coprimely (via conjugation) on $L$ and on $\overline{L}$. In addition,  the fact that $\overline{L}$ is minimal normal in $\overline{G}$ implies that $\langle x\rangle$ acts transitively on $\{S_1,\dots,S_n\}$. Applying Lemma \ref{Lemma 2}, we have
 $${\bf C}_{\overline{L}}(x)\cong {\bf C}_{S_1}({\bf N}_{\langle x\rangle}(S_1)).$$
Consider the subgroup $\langle x\rangle {\bf C}_L(x).$ If this were equal to $G$, then $\langle x\rangle$ would be normal in $G$ (and solvable), which means $\langle x\rangle\le S(G)\le {\bf O}_{p'}(G)$, but this is not possible. Therefore, $\langle x\rangle {\bf C}_L(x)<G$. Moreover, since this subgroup has order divisible by $p$, then ${\bf C}_L(x)\in\mathcal U$. Passing to the quotient and taking into account Lemma \ref{Lemma 1}, we see that ${\bf C}_{\overline{L}}(x)$ is also supersolvable.  We  now prove that $n=1$, that is, $\overline{L}$ is simple. Suppose, on the contrary, that $n>1$. As the action of $\langle x\rangle$ is transitive and $x$ is of prime order, we certainly have ${\bf N}_{\langle x\rangle}(S_1)=1$. Hence ${\bf C}_{\overline{L}}(x)=S_1$, which is not solvable, a contradiction. This shows that $n=1$.  Now, since $\langle x\rangle$ has prime order, it acts faithfully on ${\overline{L}}$, and by Theorem \ref{T2.3}, we obtain that $\overline{L}$ is a simple group of Lie type. Moreover,  the only possibilities in which ${\bf C}_{\overline{L}}(x)$ is supersolvable are  ${\bf C}_{\overline{L}}(x)\cong {\rm PSL}(2,2)\cong S_3$ or ${\bf C}_{\overline{L}}(x)\cong Sz(2)\cong C_5\rtimes C_4 $. Therefore,  again by Theorem \ref{T2.3}, we have $\overline{L} \cong {\rm PSL}(2,2^r)$  for some $r\geq 2$, or $\overline{L} \cong Sz(2^r)$ for some $r\geq 3$, respectively.

\medskip
By the elementary properties of coprime action, we can take an $\langle x\rangle$-invariant  Sylow $2$-subgroup $Q$ of $L$.  Since $Q\langle x\rangle <G$, and  has order divisible by $p$, the hypothesis gives $Q\langle x\rangle\in \mathcal{U}$. In particular,  $Q\langle x\rangle$  possesses a normal $2$-complement, which means that $Q \leq {\bf C}_L(x)$. Then, using Lemma \ref{Lemma 1}, we have
$$\overline{Q} = \frac{Q\,S(G)}{S(G)} \le \frac{{\bf C}_L(x)\,S(G)}{S(G)} = {\bf C}_{\overline{L}}( x)$$
 where $\overline{Q}$ is a Sylow $2$-subgroup of $\overline{L}$. But  this leads to a contradiction. Indeed, since $|{\rm PSL}(2,2)|=6$ and
\[
|{\rm PSL}(2,2^r)| = \frac{2^r(2^{2r}-1)}{(2,2^r-1)} = 2^r(2^r-1)(2^r+1),
\]
we have that $|\overline{P}| = 2^r$ must divide $2$, and this forces $r=1$, a contradiction. The same argument applies for ${\bf C}_{\overline{L}}(x) \cong Sz(2)$ because $|Sz(2)|=20$ and
\[
|Sz(2^r)| = 2^{2r}(2^{2r}-1)(2^r+1).
\]
This implies that $|\overline{P}| = 2^{2r}$ divides $4$, again a contradiction. This proves that  $S(G)<{\bf O}_{p'}(G)$ is impossible.

\medskip
Hence, from now on we assume $\Phi(G)=S(G)={\bf O}_{p'}(G)$.  Let $\overline{N}$ be a minimal normal subgroup of $\overline{G}$, where $N$ is its full preimage in $G$, so $S(G)<N\unlhd G$. This forces $N$ and $\overline{N}$ to be nonsolvable. Moreover, since $S(G)={\bf O}_{p'}(G)$ and $p$ divides $|N|$, if $\overline{N}$ were a proper subgroup of $\overline{G}$, then  $N\in\mathcal U$, certainly a contradiction. Hence $\overline{N}=\overline{G}$, so $\overline{G}$ must be (nonabelian) simple. Let $\overline{M}$ be a proper subgroup of $\overline{G}$ with $p\in\pi(M)$, and let $M$ be its complete preimage in $G$. Then $M<G$ with $p$ dividing $|M|$, so by  hypothesis $M\in\mathcal U$, and consequently, $\overline{M}\in\mathcal U$. As a result, $\overline{G}$ is $\mathcal{U}$-semicritical with respect to $p$.

\medskip
We claim that $\Phi(G)={\bf Z}(G)$. By the simplicity of $G/\Phi(G)$, it is clear that $G$ cannot possess a normal $p$-complement, so, by Frobenius's  criterion for $p$-nilpotence  [\citenum{KurzweilStellmacher}, 7.4.2], we may ensure the existence of a nontrivial $p$-subgroup of $G$, say $P$,  such that ${\bf N}_G(P)/{\bf C}_G(P)$ is not a $p$-group.
This  means that there exists some prime $r\neq p$ that divides the order of ${\bf N}_G(P)/{\bf C}_G(P)$; equivalently, there exists an $r$-element $x\in {\bf N}_G(P)$ with $r\neq p$ such that $x\not \in {\bf C}_G(P)$. Consider the subgroup $P\langle x \rangle$ and observe that $P\langle x \rangle/P$ is abelian. Hence $P_0=[P\langle x \rangle,P\langle x \rangle ]$ is a nontrivial $p$-subgroup of $G$ since $x$ does not centralize $P$. Now, by hypothesis,  $P\langle x \rangle \Phi(G)$, which is a proper subgroup of $G$,  must be supersolvable. We then know that
$$[P\langle x \rangle \Phi(G),P\langle x \rangle \Phi(G)]  = P_0[\Phi(G), P\langle x \rangle][\Phi(G),\Phi(G)]$$ is nilpotent. In particular,  $P_0$ centralizes the $p'$-subgroup $[\Phi(G),P_0]$. Then, considering the coprime action of $P_0$ on $\Phi(G)$, we have $1=[\Phi(G),P_0,P_0]=[\Phi(G),P_0]$, which means  $1\neq P_0\leq  {\bf C}_G(\Phi(G))$. Now, since ${\bf C}_G(\Phi(G))\Phi(G)\unlhd G$ the simplicity of $G/\Phi(G)$ forces ${\bf C}_G(\Phi(G))\Phi(G)=G$. Hence ${\bf C}_G(\Phi(G))=G$, that is, ${\Phi}(G)\leq {\bf Z}(G)$, thus proving the claim. All that remains to be shown is that $G$ is perfect, which  follows immediately from the fact that $G/\Phi(G)$ is simple.

\medskip
To prove the converse, suppose that $G$ is quasi-simple with ${\bf O}_{p'}(G)={\bf Z}(G)$ and that $G/{\bf Z}(G)$  is $\mathcal{U}$-semicritical with respect to $p$. It is easy to show that ${\bf Z}(G)=\Phi(G)$. Indeed, if $M$ is a maximal subgroup of $G$ that does not contain ${\bf Z}(G)$, then $M{\bf Z}(G)=G$. Since $G$ is perfect, we have
$G=G' =(M{\bf Z}(G))' =M'$. This contradiction proves that ${\bf Z}(G)\leq \Phi(G)$, and then the simplicity of $G/{\bf Z}(G)$ yields equality. Now, let $H$ be a proper subgroup of $G$ of order divisible by $p$. The fact that ${\bf Z}(G)=\Phi(G)$ then implies  that $H{\bf Z}(G)/{\bf Z}(G)$ is a proper subgroup of $G/{\bf Z}(G)$, and its order is  also divisible by $p$. Therefore, the hypothesis gives us that
$$H{\bf Z}(G)/{\bf Z}(G)\cong H/H\cap{\bf Z}(G)$$ is supersolvable. Because $H\cap {\bf Z}(G)\leq {\bf Z}(H)$, it follows that $H/{\bf Z}(H)$ is supersolvable and, consequently, $H$ itself is supersolvable. This shows that $G$ is $\mathcal{U}$-semicritical with respect to $p$, as required.
\end{proof}

\begin{remark}
 Theorem A provides a complete characterization of finite nonsolvable  $\mathcal{U}$-semicritical groups with respect to any odd prime $p$, thereby strengthening  [\citenum{BeltranShao}, Theorem B] in the special case $p=3$. Notice further that the order of ${\bf Z}(G)$ is bounded by the order of the Schur multiplier of the chief factor $G/{\bf Z}(G)$. We also wish to acknowledge that T. Wilde has recently obtained a complete classification of the finite nonsolvable $\mathcal{U}$-semicritical groups with respect to $3$, which he kindly brought to our attention via private communication \cite{Wilde}.   
\end{remark}

\section{Proofs of Theorems B and C}

To determine which alternating groups are $\mathcal{U}$-semicritical with respect to a given prime, we will employ the well-known classification of transitive permutation groups of degree prime. Combining Galois's classification of solvable transitive groups of prime degree,  a result of Burnside \cite{Burnside1895}, and the Classification of Finite Simple Groups  (see \cite{Cameron1981} for example), we summarize these results as follows.

\begin{theorem}\label{T3.1} A transitive permutation group $G$ of prime degree $p$ is one of  the following:
\begin{itemize}
\item[(1)] In the solvable case, $G$ is isomorphic to a subgroup of  the general affine linear group ${\rm AGL}(1,p)$ that contains the translation subgroup (of order $p$).
\item[ (2)] In the nonsolvable case, $G$ is isomorphic to:
\begin{itemize}
 \item[(a)] $A_p$ or $S_p$ where $p\geq 5$;
 \item[(b)] a group $H$ such that  ${\rm PSL}(d,q)\leq H\leq {\rm P\Gamma L}(d,q)$ where $(q^d-1)/(q-1)=p$ with $q$ a prime power and $d\geq 2$.
\item[(c)] ${\rm PSL}(2,11)$ or $M_{11}$ when $p=11$, or $M_{23}$ when $p=23$.
\end{itemize}
\end{itemize}
\end{theorem}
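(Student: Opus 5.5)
The plan is to split according to whether $G$ is solvable, using the classical theorems of Galois and Burnside, and then to sift the Classification-based list of finite $2$-transitive groups by the condition that the degree is prime. Since the statement is a summary of known results, the proof consists of assembling \cite{Burnside1895} and \cite{Cameron1981} and checking which degrees in the known lists are prime.

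\emph{Solvable case.} Let $N$ be a minimal normal subgroup of $G$; it is elementary abelian. Because $N\unlhd G$ and $G$ is transitive, the $N$-orbits form a block system with blocks of equal size dividing $p$. Since $N\neq 1$ acts faithfully, $N$ is transitive. An abelian transitive group is regular, so $|N|=p$ and $N$ is generated by a $p$-cycle (a translation). Then $G\le {\bf N}_{S_p}(N)$, and this normalizer is identified with ${\rm AGL}(1,p)$ acting on $\mathbb{F}_p$, giving (1) (Galois). Conversely, every subgroup of ${\rm AGL}(1,p)$ containing the translations is transitive and solvable.

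\emph{Nonsolvable case.} First I would invoke Burnside's theorem \cite{Burnside1895}: a transitive group of prime degree is either solvable (hence as above) or doubly transitive. Next I would apply the CFSG classification of finite $2$-transitive groups, as in \cite{Cameron1981}. The socle of such a group is either elementary abelian regular or nonabelian simple. In the affine case the socle has order equal to the degree $p$, so $G\le {\rm AGL}(1,p)$ is solvable; this case is therefore excluded. In the almost simple case, with socle $T\le G\le {\rm Aut}(T)$, I would run through the list of degrees and keep only the primes:
\begin{itemize}
\item $A_n$ of degree $n$: this gives (a) with $n=p\ge 5$.
\item ${\rm PSL}(d,q)$ of degree $(q^d-1)/(q-1)$: this gives (b), where the overgroups lie in ${\rm P\Gamma L}(d,q)$ (graph automorphisms do not preserve the action on points when $d\ge3$).
\item ${\rm PSU}(3,q)$ of degree $q^3+1$ and ${}^2G_2(q)$ of degree $q^3+1$: these are divisible by $q+1$, so they are not prime.
\item ${\rm Sz}(q)$ with $q=2^{2n+1}$, of degree $q^2+1$: this is divisible by $5$ and exceeds $5$.
\item ${\rm Sp}(2d,2)$ of degrees $2^{2d-1}\pm 2^{d-1}$: these are even.
\item ${\rm PSL}(2,11)$ of degree $11$, $M_{11}$ of degrees $11$ and $12$, $M_{12}$ of degree $12$, $M_{22}$ of degree $22$, $M_{23}$ of degree $23$, $M_{24}$ of degree $24$, $A_7$ of degree $15$, ${\rm HS}$ of degree $176$, and ${\rm Co}_3$ of degree $276$: the prime degrees here yield exactly (c).
\end{itemize}

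The main obstacle is not any computation. It is the reliance on two deep inputs: Burnside's $2$-transitivity theorem and the CFSG-dependent list of $2$-transitive groups. Both will simply be cited. Beyond that, the only care needed is to confirm that in each surviving family the overgroups of the socle are as stated (for example, that $M_{11}$ and $M_{23}$ have no proper overgroups in $S_p$ acting on these points), which follows from the known outer automorphism groups.
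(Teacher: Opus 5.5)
Your proposal is correct and follows the paper's route. The paper gives no independent argument: it simply assembles Galois's description of the solvable case, Burnside's dichotomy (solvable or doubly transitive), and the CFSG-based list of $2$-transitive groups in \cite{Cameron1981}, and your direct treatment of the solvable case and your sifting of that list by primality of the degree are exactly the checks this citation leaves implicit. One small precision: ${\rm Out}({\rm PSL}(2,11))$ is not trivial, so for ${\rm PSL}(2,11)$ on $11$ points the absence of overgroups does not follow from the outer automorphism group alone; it follows because ${\rm PGL}(2,11)$ fuses the two classes of subgroups $A_5$ of ${\rm PSL}(2,11)$ and so has no subgroup of index $11$, and the $2$-transitive list already records this.
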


The above classification, together with several properties of certain  simple groups, such as   ${\rm PSL}(n,q)$, is sufficient to demonstrate Theorem  B.

\begin{proof} [Proof of Theorem B] First, we analyze $A_5$. Since the maximal subgroups of $A_5$ are $D_{10}$, $A_4$ and $S_3$, we clearly see that $A_5$ is $\mathcal{U}$-semicritical only with respect to $5$. As the prime $5$ satisfies the conditions given in the statement, we can assume for the rest of the proof that $n>5$. Also,  if $n$ is a composite number, then the set of primes dividing $n!$ is exactly the same as those dividing $(n-1)!$.  As $A_{n-1}\leq A_n$, we conclude  that $A_n$ cannot be $\mathcal{U}$-semicritical with respect to any prime that divides its order. Henceforth, we will assume that $n$ is prime, say $p$ (larger than $5$). In this case, for every prime divisor $r\neq p$ of the order of $A_p$, the same argument shows that $A_p$ is not $\mathcal{U}$-semicritical with respect to $r$. For that reason, we only need to study when $A_p$ is $\mathcal{U}$-semicritical with respect to $p$.

\medskip
First, since $M_{11}$ and $M_{23}$ are subgroups of $A_{11}$ and $A_{23}$, respectively, we deduce that both alternating groups cannot be $\mathcal{U}$-semicritical with respect to $11$ and $23$, respectively. Now, suppose that we can write $p=(q^d-1)/(q-1)$ for some prime power $q$ and some $d\geq 2$. It is known that ${\rm PSL}(d,q)$ acts in natural and transitive way on the set of  $(q^d-1)/(q-1)=p$ points contained in the projective plane ${\rm PG}(d-1,q)$.
 This means that ${\rm PSL}(d,q)\leq S_p$. Now, since this subgroup is nonabelian simple because $p>5$,  the simplicity of $A_p$ certainly gives ${\rm PSL}(d,q)\leq A_p$. Furthermore, we claim that the equality does not hold. This is due to the fact that the only ismorphisms that exist between both families of simple groups are: ${\rm PSL}(2,4)\cong {\rm PSL}(2,5)\cong A_5$, ${\rm PSL}(2,9)\cong A_7$ and ${\rm PSL}(4,2)\cong A_8$; however, we are assuming $p>5$ and the pairs $(2,9)$ and $(4,2)$ do not satisfy the assumed arithmetic conditions. Thus, the fact that  the order of this proper subgroup of $A_p$ is divisible by $p$ implies that $A_p$ is not $\mathcal{U}$-semicritical with respect to $p$. This finishes the proof of the direct implication.

\medskip
Conversely, assume that $p\neq 11,23$ and that $p$ is not of the form
$(q^d-1)/(q-1)=p$ with $q$ a prime power and $d\geq 2$. We need to prove that every proper  subgroup $H$  of $A_p$ of order divisible by $p$ is supersolvable.
 It is clear that $H$ contains a $p$-cycle and acts transitively on a set of cardinality $p$. By applying Theorem \ref{T3.1}, taking into account that $H$ is proper in $A_p$ and our assumptions on $p$, the only possibility for $H$ is to be isomorphic to a subgroup of ${\rm AGL}(1,p)\cong C_p \rtimes C_{p-1}$, which is obviously supersolvable. Therefore, the converse implication is proved.
\end{proof}

\begin{proof}[Proof of Theorem  C]

To investigate the structure of the sporadic simple groups, we will primarily rely on  \cite{Atlas}. However, given that the description  of the maximal subgroups of  $Fi_{23}, Fi_{24}'$, $J_4$, $Th$, $B$ and $M$  is incomplete in \cite{Atlas}  and has been updated, we also consult more recent sources \cite{Dietrich, KleidmanJ4, KleidmanFi23,LintonTh, LintonThCorr, LintonFi24,WilsonB}.

\medskip
First, we refer to the diagram presented on [\citenum{Atlas}, p. 238] which allows us to exclude certain sporadic groups from being $\mathcal{U}$-semicritical with respect to certain primes. Specifically, if any sporadic group contains another sporadic group as a factor of a subgroup, then the former cannot be $\mathcal{U}$-semicritical with respect to any prime divisor of the latter's order. Using this observation, we conclude that the only groups and primes that must be examined are the following:
 $M_{23}$ for $p=23$; $Sz$ for $p=13$; $O'N$ for $p=31$; $Fi_{22}$ for $p=13$; $HN$ for $p=19$; $Ly$ for $p\in \{31,37,67\}$; $Fi_{23}$ for $p=17$; $J_{4}$ for $p\in\{29,31,37,43\}$; $Fi_{24}'$ for $p=29$; $B$ for $p=47$; and $M$ for $p\in\{41,59,71\}$. Likewise, $M_{11}$, $J_{1}$, $M_{22}$, $J_{2}$, $J_{3}$, $He$, $Ru$ and $Th$ need to be checked for all primes. By the  same observation, the groups $M_{12}$, $HS$, $M_{24}$, $McL$, $Co_{3}$, $Co_{2}$ and $Co_{1}$ can be straightforwardly discarded from being $\mathcal{U}$-semicritical with respect to any prime $p$.

\medskip
We now use \cite{Atlas} again to search for non-supersolvable maximal subgroups whose prime divisors, taken colletively, cover all the primes dividing the order of certain sporadic groups, proving that those groups are not $\mathcal{U}$-semicritical with respect to any prime. Specifically, $M_{11}$ has maximal subgroups $A_6.C_2$ and ${\rm PSL}_2(11)$; $M_{22}$ has ${\rm PSL}_3(4)$ and ${\rm PSL}_2(11)$; $J_{2}$ has ${\rm PSU}_3(3)$ and $A_{5}$; $J_{3}$ has ${\rm PSL}_2(16) \rtimes C_{2}$ and ${\rm PSL}_2(19)$; $He$ has $Sp_4(4) \rtimes C_2$ and $C_3. S_7$; $Ru$ has ${\rm PSL}_2(13) \rtimes C_2$ and ${\rm PSL}_2(29)$. Thus,  these groups can be excluded. In the case of $Th$, by appealing to \cite{LintonTh,LintonThCorr}, we also find three subgroups: ${\rm PSL}_3(3)$, ${\rm PSL}_2(19) \rtimes C_2$ and ${C_2^5 . \rm PSL}_5(2)$, whose orders include all the primes dividing its order.

\medskip
Continuing with \cite{Atlas}, to determine the remaining sporadic  groups that are not $\mathcal{U}$-semicritical with respect to any prime $p$, we provide examples of non-supersolvable maximal subgroups whose order is divisible by the remaining primes that need to be checked. In this way, for $p=13$, the group $Sz$ contains $G_2(4)$; for $p=31$, $O'N$ has ${\rm PSL}_2(31)$; for $p=13$, $Fi_{22}$ contains the subgroup $O_7(3)$; for $p=19$, $HN$ has ${\rm PSU}_3(8)\rtimes C_3$. And finally, according to \cite{KleidmanFi23}, for $p=17$, the group $Fi_{23}$  contains $Sp_8(2)$. 

\medskip
To complete the proof, bearing in mind the analysis we have just carried out, we show exactly which sporadic groups are $\mathcal{U}$-semicritical with respect to a given prime, and thus justify the inclusion of each of the groups in the list of the statement. For  $M_{23}$, the only maximal subgroup of order  divisible by $23$ is $C_{23}\!\rtimes\!C_{11}$, which is supersolvable. The group $Ly$ contains $G_{2}(5)$ as a non-supersolvable subgroup of order divisible by $31$. However, the only maximal subgroup of $Ly$ that has order divisible by $37$ is $C_{37}\rtimes C_{18}$, and it is supersolvable. Likewise, the only maximal subgroup of $Ly$ of order divisible by $67$ is $C_{67}\rtimes C_{22}$, which is supersolvable too. According to \cite{KleidmanJ4}, $J_{4}$ contains $(C_{2})^{10}\rtimes {\rm 
PSL}_{5}(2)$ and ${\rm PSU}_{3}(11) \rtimes C_{2}$,  of orders divisible by $31$ and $37$, respectively.  Nonetheless, the only maximal subgroups of $J_{4}$ of order  divisible by $29$ and $43$ are $C_{29}\rtimes C_{28}$ and $C_{43}\rtimes C_{14}$, respectively, and both are supersolvable. On the other hand, taking \cite{LintonFi24} into account, the only maximal subgroup of $Fi_{24}'$ of order divisible by $29$ is $C_{29}\rtimes C_{14}$, which is supersolvable. Regarding $B$,  using \cite{WilsonB}, we know that the only maximal subgroup of order divisible by $47$ is $C_{47}\rtimes C_{23}$, which is supersolvable. Finally, according to \cite{Dietrich}, $M$ contains ${\rm PSL}_2(71)$ and ${\rm PSL}_{2}(41)$, so $71$ and $41$ can be discarded. However,  $M$ has a unique maximal subgroup of order divisible by $59$, namely, $C_{59}\rtimes C_{29}$, which is supersoluble. This completes the list in the statement.
\end{proof}

\begin{remark}\label{remarkPSL}It seems very complex to determine which simple groups of Lie type are $\mathcal{U}$-semicritical with respect to a given prime. Even for the family ${\rm PSL}(2,q)$ with $q$ a prime power, we want to show that the problem exhibits considerable difficulty.  However, when $q$ is prime, it is not hard to determine  which primes such a group is $\mathcal{U}$-semicritical with respect to. This is due to the structure of the maximal subgroups of ${\rm PSL}(2,q)$.   As stated in Theorem \ref{T2.4}, the maximal subgroups are the Borel subgroups (of order $q(q-1)$ or $q(q-1)/2$), which are supersolvable; certain dihedral groups, which are supersolvable too,  and possibly $A_4$, $S_4$ and $A_5$ depending on the arithmetical properties of $q$. Thus, for $p\neq 2,3,5$ we have that ${\rm PSL}(2,q)$ is $\mathcal{U}$-semicritical.
Now, if $q$ is not a prime, again by  Theorem \ref{T2.4},  there exist maximal subgroups isomorphic to $\mathrm{PSL}(2, q_0)$  and $\mathrm{PGL}(2, q_0)$ for an appropriated $q_0$, so  ${\rm PSL}(2,q)$ is not  $\mathcal{U}$-semicritical with respect to a given prime $p\geq 7$ dividing the order of such subgroups when they are not solvable.
More generally, if $q$ is a prime power but not a prime and $n\geq 3$ one can give examples of groups ${\rm PSL}(n, q)$ that may be  $\mathcal{U}$-semicritical or not with respect to a given prime.  For example, ${\rm PSL}(2,27)$ is $\mathcal{U}$-semicritical with respect to $7$ but not with respect to $13$; ${\rm PSL}(5,2)$ is $\mathcal{U}$-semicritical with respect to $31$ but not with respect to $7$; or ${\rm PSL}(3,8)$ is $\mathcal{U}$-semicritical with respect to $73$ but not with respect to $7$. 
\end{remark}

\section{Solvable $\mathcal {U}$-semicritical groups}

For dealing with solvable $\mathcal{U}$-semicritical groups, we will  utilize standard properties concerning solvable, supersolvable and minimal non-supersolvable groups,  as well as the classification of minimal nonabelian groups (for instance, see [\citenum{BallesterEsteban}, Theorem 1]. In accordance with the latter classification, one easily deduces the following.

\begin{lemma}\label{Lemma 4.1}  A minimal nonabelian group that is not supersolvable has the following structure: $G=V_q \rtimes C_{r^s}$, where $q$ and $r$ are different prime numbers, $s$ is a positive
integer, and $V_q$ is an irreducible $C_{r^s}$-module over the field of $q$ elements of dimension at least $2$,
with the kernel being the maximal subgroup of $C_{r^s}$.
\end{lemma}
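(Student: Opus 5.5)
We derive Lemma \ref{Lemma 4.1} from the classification of minimal nonabelian groups in [\citenum{BallesterEsteban}, Theorem 1], going through its types one by one. By Miller and Moreno \cite{MillerMoreno}, a minimal nonabelian group $G$ is either a $q$-group for some prime $q$, or $G=Q\rtimes R$ with $Q$ a normal Sylow $q$-subgroup and $R=\langle y\rangle$ a cyclic Sylow $r$-subgroup for distinct primes $q,r$.

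Nilpotent groups are supersolvable, so a non-supersolvable $G$ cannot be of prime power order. Hence $G=Q\rtimes R$ as above. The proper subgroup $Q\langle y^r\rangle$ is abelian, so $y^r$ centralizes $Q$. Also, $\Phi(R)=\langle y^r\rangle$ is central in $G$, since it centralizes both $Q$ and $R$.

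Next we show that $Q$ is elementary abelian and $R$ acts irreducibly on it. Let $Q_0=[Q,R]$. Coprime action gives $Q=[Q,R]\,{\bf C}_Q(R)$. If $Q_0<Q$, then $Q_0R$ is a proper subgroup, hence abelian, so $R$ centralizes $Q_0=[Q,R,R]$, which forces $[Q,R]=1$ and $G$ abelian, a contradiction. Thus $Q=[Q,R]$. For any proper $R$-invariant subgroup $Q_1<Q$, the subgroup $Q_1R$ is proper and therefore abelian, so $Q_1\le{\bf C}_Q(R)$. Since $[Q,R]=Q$, we get ${\bf C}_Q(R)\le\Phi(Q)$; indeed $R$ acts nontrivially on $Q/\Phi(Q)$, and ${\bf C}_{Q/\Phi(Q)}(R)$ is the image of ${\bf C}_Q(R)$ by Lemma \ref{Lemma 1}. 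In particular $\Phi(Q)<Q$ and ${\bf C}_Q(R)<Q$ are both proper $R$-invariant subgroups, so they are contained in ${\bf C}_Q(R)$ as described.

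At this point we rely on [\citenum{BallesterEsteban}, Theorem 1] for the fact that, in the non-abelian-$Q$ case, $Q$ has a special structure with $G/\Phi(Q)$ again minimal nonabelian. The key claim is that if $\Phi(Q)\ne1$ then $G$ is still supersolvable or still of the form above. We argue as follows. The group $G/\Phi(G)$, with $\Phi(G)\ge\Phi(Q)\Phi(R)$, is $(Q/\Phi(Q))\rtimes C_r$ with faithful irreducible action. A group is supersolvable if and only if $G/\Phi(G)$ is. So $G$ is non-supersolvable exactly when $\dim Q/\Phi(Q)\ge 2$. In the classification the non-supersolvable entries are exactly those with $Q$ elementary abelian of order $q^m$, $m\ge2$ equal to the order of $q$ modulo $r$.

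We expect this last step, ruling out a non-abelian $Q$ (or reconciling with the classification's list), to be the main obstacle. We would simply quote the list in \cite{BallesterEsteban}, which contains exactly one non-nilpotent type $V_q\rtimes C_{r^s}$ with irreducible $V_q$. We would then check supersolvability by dimension: one-dimensional modules give supersolvable groups, and dimension at least $2$ gives a non-cyclic chief factor $V_q$. The kernel of the action is the maximal subgroup $\langle y^r\rangle$, as shown above.
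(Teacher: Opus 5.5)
Your eventual route (quote the classification of minimal nonabelian groups in [\citenum{BallesterEsteban}, Theorem 1], discard groups of prime-power order because they are nilpotent, and test the remaining type $V_q\rtimes C_{r^s}$ by dimension) is exactly the deduction the paper leaves to the reader, and it is correct. In that test, $\dim V_q=1$ gives the supersolvable group $C_q\rtimes C_{r^s}$, while $\dim V_q\ge 2$ makes $V_q$ a chief factor of non-prime order.

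The self-contained detour, however, rests on a misconception, and the \emph{main obstacle} you identify does not exist. Since $G$ is not a $q$-group, $Q$ is a proper subgroup of $G$ and hence abelian by hypothesis. You even use this tacitly when you conclude that $[Q,R]=1$ makes $G$ abelian. So there is no non-abelian-$Q$ case, and the cited classification contains none. Nonabelian special Sylow subgroups arise for minimal non-nilpotent groups such as ${\rm SL}(2,3)$, which are not minimal nonabelian.

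With $Q$ abelian, your direct argument closes without quoting any list:
\begin{itemize}
\item Coprime action gives $Q=[Q,R]\times{\bf C}_Q(R)$, so $Q=[Q,R]$ forces ${\bf C}_Q(R)=1$.
\item You showed that every proper $R$-invariant subgroup of $Q$ lies in ${\bf C}_Q(R)$. This now yields $\Phi(Q)=1$ and irreducibility at once.
\item The kernel is exactly $\langle y^r\rangle$, because $y$ cannot centralize $Q$.
\end{itemize}
This also makes your step ${\bf C}_Q(R)\le\Phi(Q)$ unnecessary. As written, its justification is insufficient anyway: a nontrivial action on $Q/\Phi(Q)$ does not exclude fixed points. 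One would need the Fitting decomposition of $Q/\Phi(Q)$ together with $[Q,R]=Q$.
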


We will also make  use of the next result to prove Theorem D.

\begin{lemma}\label{Lemma 4.2}\textnormal{ \cite{Doerk}} Let $G$ be a finite group. Suppose that $A$, $B$, $C$ and $D$ are supersolvable subgroups of $G$. If any pairs of $\lvert G:A\rvert$, $\lvert G:B\rvert$, $\lvert G:C\rvert$ and $\lvert G:D\rvert$ are coprime, then $G$ is supersolvable.
\end{lemma}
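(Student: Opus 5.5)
We argue by induction on $|G|$, taking $G$ to be a counterexample of minimal order. First, $G$ is solvable. Each of $A,B,C$ is solvable and their indices are pairwise coprime, so the theorem of Wielandt--Kegel (a group with three solvable subgroups of pairwise coprime indices is solvable) applies. The hypothesis passes to quotients: for $N\unlhd G$, the subgroups $AN/N,\dots,DN/N$ are supersolvable and $|G/N:AN/N|$ divides $|G:A|$. Since $\mathcal U$ is a saturated formation, minimality gives $\Phi(G)=1$ and a unique minimal normal subgroup $N$. Hence $N$ is elementary abelian of order $q^n$ with $n\ge 2$, and $N={\bf C}_G(N)={\bf F}(G)$. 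In particular ${\bf O}_q(G/N)=1$, and $G/N$ acts faithfully and irreducibly on $N$.

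The four indices are pairwise coprime, so at most one of them is divisible by $q$. Relabelling, $A,B,C$ have $q'$-index, so each contains a Sylow $q$-subgroup of $G$ and therefore contains $N={\bf O}_q(G)$. Take $X\in\{A,B,C\}$. As $X$ is supersolvable and $N\unlhd X$, there is an $X$-chief series through $N$ with factors of order $q$. The kernel of the action of $X$ on these factors is a $q$-group modulo ${\bf C}_X(N)=N$. Hence $X/N$ has a normal Sylow $q$-subgroup, and its quotient by it is abelian of exponent dividing $q-1$.

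The key step is to show that $G/N$ is a $q'$-group. I would first prove that $G/N$ is $q$-closed, again by a Wielandt--Kegel type argument: the class of $q$-closed groups is a formation, and $G/N$ has three $q$-closed subgroups $A/N,B/N,C/N$ of pairwise coprime indices. Then ${\bf O}_q(G/N)=1$ forces $q\nmid |G/N|$. It follows that $A/N,B/N,C/N$ are abelian of exponent dividing $q-1$.

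Next I show that $G/N$ is itself abelian. For any prime $r$, at most one of $A,B,C$ has index divisible by $r$, so a Sylow $r$-subgroup of $G/N$ lies in two of them. For two primes $r\ne s$, pigeonhole gives one of the three subgroups containing both a Sylow $r$- and a Sylow $s$-subgroup. These commute inside that abelian subgroup. Adjusting conjugates shows that $G/N$ is nilpotent with abelian Sylow subgroups, so it is abelian of exponent dividing $q-1$. An abelian group of exponent dividing $q-1$ acting faithfully and irreducibly over $\mathbb F_q$ has all its eigenvalues in $\mathbb F_q$, so the module is $1$-dimensional, i.e. $|N|=q$. This contradicts $n\ge 2$.

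The main obstacle is the two ``three subgroups'' steps: the $q$-closedness of $G/N$, and the passage from commuting Sylow subgroups inside the individual factors to the whole group being abelian. There the choice of conjugates must be made compatibly, for instance via a Sylow system of the solvable group $G/N$ reduced into $A/N$, $B/N$, $C/N$ using the factorizations $G=AB=AC=BC$, which follow from coprimality of the indices. This is also where the fourth subgroup $D$ gives extra room: if one of these steps fails with three subgroups, the roles can be redistributed among the four so that every needed pair of primes is covered.
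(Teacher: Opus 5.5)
The paper gives no proof of this lemma; it is quoted from Doerk, so your argument has to stand on its own. Most of your reduction is sound. Solvability via Wielandt--Kegel, $\Phi(G)=1$, and a unique minimal normal subgroup $N={\bf C}_G(N)$ of order $q^n$ with $n\ge 2$ are all fine. So are the relabelling that gives $A,B,C$ of $q'$-index (this is exactly where four subgroups are needed), the structure of $X/N$, and the closing eigenvalue argument.

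What is missing is exactly the two steps you flag. For the $q$-closedness of $G/N$, the reason you give is not valid: being a formation does not yield a ``three subgroups of pairwise coprime indices'' theorem. Indeed $\mathcal U$ is a formation, yet $\mathbb F_7^2\rtimes S_3$, with $S_3\le {\rm GL}(2,7)$ acting irreducibly, has the supersolvable subgroups $\mathbb F_7^2\rtimes C_3$, $\mathbb F_7^2\rtimes C_2$ and $S_3$ of indices $2,3,49$, and is not supersolvable. For the abelian step, pigeonhole only gives, for each $r\neq s$, \emph{some} Sylow $r$-subgroup commuting with \emph{some} Sylow $s$-subgroup. This can be completed: $R\times S$ is then a nilpotent Hall $\{r,s\}$-subgroup, so by Hall's conjugacy theorem $P_rP_s$ is nilpotent for any Sylow basis $\{P_r\}$. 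You do not carry this out, and neither a Sylow system reducing into $A/N,B/N,C/N$ nor any further use of $D$ is needed.

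The simplest way to close both steps is Sylow counting. If $X\le G$ contains a Sylow $r$-subgroup $R$ of $G$ with $R\unlhd X$, then $X\le {\bf N}_G(R)$, so the number $n_r(G)$ of Sylow $r$-subgroups of $G$ divides $|G:X|$.
\begin{itemize}
\item In $\bar G=G/N$, the groups $\bar A$ and $\bar B$ each have a normal Sylow $q$-subgroup that is Sylow in $\bar G$. So $n_q(\bar G)$ divides the coprime integers $|G:A|$ and $|G:B|$, and the Sylow $q$-subgroup of $\bar G$ is normal. Since ${\bf O}_q(\bar G)=1$, this gives $q\nmid|\bar G|$.
\item Hence $\bar A,\bar B,\bar C$ are abelian of exponent dividing $q-1$. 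For each prime $r$, two of them have $r'$-index and so contain a Sylow $r$-subgroup of $\bar G$ as a normal subgroup. The same count gives $n_r(\bar G)=1$.
\item So $\bar G$ is nilpotent, and each of its Sylow subgroups lies in one of $\bar A,\bar B,\bar C$. Therefore $\bar G$ is abelian of exponent dividing $q-1$, and your eigenvalue argument finishes the proof.
\end{itemize}
The example above shows why $D$ matters. There the third subgroup has index divisible by $q=7$, so it does not contain $N$, and the count for $r=2$ fails.
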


\begin{proof} [Proof of Theorem D]
We begin by showing that $|\pi(G)|\leq 4$. 
Set $\pi(G)=\{p_0, p_1, \ldots, p_r\}$ with $p_0=p$. Since $G$ is solvable, it possesses a Hall $p_i'$-subgroup, say $H_i$, for every prime $p_i$ with $i=0,1,\ldots r$. Suppose that $|\pi(G)|>4$, so $r > 3$.  Then, for every $i=1, \ldots, r$, we have that $H_i$ is a proper subgroup of $G$ of order divisible by $p$, so by hypothesis, every $H_i$ is supersolvable. Thus, there are at least four subgroups $H_{1},H_{2},H_{3}, H_{4}$ whose indices in $G$ are powers of distinct primes. In particular, the indices are pairwise coprime. Applying Lemma \ref{Lemma 4.1}, we would get that $G$ is supersolvable, contradicting our assumptions. This proves $|\pi(G)|\le 4$.

\medskip
Next, we prove that $G$ satisfies  the properties appearing in  each of the cases of the statement and that the $p$-length of $G$ is $1$. If $G$ is $\mathcal{U}$-critical (of order divisible by $p$), it is clear that $G$ has $p$-length $1$ according to the structure of such groups [\citenum{BallesterEsteban}, Theorem 10]. Henceforth, for the rest of the proof we will assume that $G$ is not $\mathcal{U}$-critical, that is,  there exists a maximal subgroup $M$ of $G$ that is not supersolvable. By hypothesis, $M$ must be a $p'$-group, and then by the solvability of $G$, it is clear that $M$ is a Hall $p'$-subgroup of $G$.

\medskip
Assume first  that  ${\bf O}_p(G)\neq 1$ and we will prove (2).  The maximality of $M$ gives  $G=M{\bf O}_p(G)$, and obviously, $G$ has $p$-length 1. Suppose that there exists $L$ a minimal normal subgroup of $G$ properly contained in ${\bf O}_p(G)$. Then $LM <G$ is supersolvable by hypothesis, contradicting the non-supersolvability of $M$. Accordingly,  ${\bf O}_p(G)$ must be a minimal normal subgroup of $G$. Furthermore, if $K$ is a maximal subgroup $M$, then ${\bf O}_p(G)K<G$ is supersolvable, so $K$ is also supersolvable, and this proves that $M$ is $\mathcal{U}$-critical. Thus, the first part of (2) is proved.
Suppose, in addition, that  ${\bf O}_{p'}(G)=1$. Then, for every maximal subgroup $K$ of $M$, the subgroup $K{\bf O}_p(G)$ is supersolvable by hypothesis and, as a consequence, $(K{\bf O}_p(G))'=K'[K, {\bf O}_p(G)]$ is nilpotent. It follows that  $K'$, being a normal $p$-complement of $(K{\bf O}_p(G))'$, centralizes $[K,{\bf O}_p(G)]$, so in particular, $[{\bf O}_p(G),K',K']= 1$.  By  coprime action, we deduce that $K'\subseteq {\bf C}_G({\bf O}_{p}(G))$. Now, since ${\bf O}_{p'}(G)=1$, it is known that $ {\bf C}_G({\bf O}_{p}(G))\subseteq {\bf O}_p(G)$, which gives $K'=1$. Moreover, as seen above, $M$ is not supersolvable,  in particular, is not abelian, so we conclude that $M$ is also $\mathcal{A}$-critical. Furthermore,  in view of [\citenum{ShiTian}, Theorem 1.1], we have $|\pi(M)|=2$, and hence $|\pi(G)|=3$. Finally, observe that the action of $M$ on ${\bf O}_p(G)$ is faithful because ${\bf O}_{p'}(G)=1$, so $M\leq$ Aut$({\bf O}_p(G))$. We deduce that $|{\bf O}_p(G)|\geq p^2$; otherwise, $M$ would be abelian, a contradiction. Therefore, the second part of (2) is proved.

\medskip
Finally, we prove (3), so we assume ${\bf O}_p(G)=1$, and by solvability ${\bf O}_{p'}(G)>1$. Let $H={\bf O}_{p',p}(G)$ (possibly equal to $G$) and $P$ a Sylow $p$-subgroup of $G$. The maximality of $M$ indicates that $MH=G$, and hence $P\leq H$. In particular, $G$ has $p$-length $1$. On the other hand, note that ${\bf O}_{p'}(G)\leq M$, and we distinguish two possibilities. If  ${\bf O}_{p'}(G)= M$,  then the normality of $M$ clearly yields $|P|=p$, so we get all the conditions in (3). Henceforth, we assume that
${\bf O}_{p'}(G)< M$.  Consider ${\bf N}_G(P)$, which is a proper subgroup of $G$, so it is supersolvable by hypothesis. Take $P_0$ to be a minimal normal subgroup of ${\bf N}_G(P)$ with $P_0\leq P$. Then, the supersolvability of ${\bf N}_G(P)$ implies that $|P_0|=p$. Also, the Frattini argument gives $G=H{\bf N}_G(P)$, and by Dedekind's rule $M={\bf O}_{p'}(G){\bf N}_M(P)$. Now, since $P_0{\bf N}_M(P)\leq G$, then $P_0{\bf N}_M(P){\bf O}_{p'}(G)=P_0M$ is also a subgroup of $G$. By maximality, $P_0M=G$, and this forces $P_0=P$. Thus, we again get the properties described in (3), so the proof is finished.
\end{proof}

\begin{remark}\label{Remark 4.3Ex} In  Section 5, we will see that, in case (2) of Theorem D, the subgroup ${\bf O}_p(G)$ may not have order $p$ when ${\bf O}_{p'}(G)>1$. Moreover, we will show that, unlike case (2),  the maximal subgroup $M$ in case (3) does not  need to be $\mathcal{U}$-semicritical with respect to $p$.
 \end{remark}

\section{Examples}

We illustrate several examples of groups  satisfying each of the conditions obtained in the  distinct sections of Theorem D. When possible, we will refer to the {\sf SmallGroup } library of GAP \cite{GAP4}, to facilitate the reader's verification.

\medskip
\noindent
{\bf Example 1}. To obtain examples of case (1), it suffices to take any $\mathcal{U}$-critical group of order divisible by $p$, which may belong to  any of the 11 distinct types of groups described in [\citenum{BallesterEsteban}, Theorem 12].

\medskip
\noindent
{\bf Example 2}. 
 In case (2), one may take any $\mathcal{U}$-critical  group $H$ of $p'$-order, and then construct the direct product $G=C_p\times H$, which certainly satisfies the required conditions. 
 
 \medskip
 \noindent
 {\bf Example 3}.
Another  example of case (2) of Theorem D in which ${\bf O}_p(G)$ is non-central in $G$ is given by $$G={\sf SmallGroup}(84,11)\cong C_7 \rtimes A_4,$$ where $A_4$ acts on $C_7$ with $C_2\times C_2$ contained in its kernel.  We have that $G$ is $\mathcal{U}$-semicritical with respect to $7$,  ${\bf  O}_7(G)\cong C_7$ and $M=A_4$ is a maximal $\mathcal{U}$-critical $7'$-subgroup. Also, notice that $1\neq {\bf O}_{p'}(G)\cong C_2\times C_2 $.

\medskip
\noindent
{\bf Example 4}. This example shows that in case (2) we cannot guarantee that $|{\bf O}_p(G)|=p$ when ${\bf O}_{p'}(G)>1$. We start from the group  $H\cong {\rm SL}(2,3)$, which admits the presentation
$$H=\langle a, b, t \mid a^4=b^4=t^3=1, a^2=b^2, a^b= a^{-1}, a^t=b, b^t =ab\rangle . $$
Consider the action of $H$ on the vector space $V=\mathbb{F}_7^3$,  given by the following matrices with entries in $\mathbb{F}_7$:

$$ a= \begin{pmatrix}
1~~~ & 0 & ~~~0 \\
0~~~ & -1~~~ & ~~~0\\
0~~~ & 0 &-1
\end{pmatrix},
\, b =
\begin{pmatrix}
-1            & \phantom{-}0 & \phantom{-}0 \\
\phantom{-}0  & \phantom{-}1 & \phantom{-}0 \\
\phantom{-}0  & \phantom{-}0 & -1
\end{pmatrix}
,
\, t =
\left(
\begin{array}{rrr}
0 & 1 & 0 \\
0 & 0 & 1 \\
1 & 0 & 0
\end{array}
\right)
.$$
Notice that ${\bf Z}(H)\cong C_2$ lies in the kernel of the action. Then, it is easy to check that the group $G=V\rtimes H$ is $\mathcal{U}$-semicritical with respect to $7$. Furthermore, ${\bf O}_7(G)=V$ and ${\bf O}_{7'}(G)= {\bf Z}(H)$.

\medskip
\noindent
 {\bf Example 5}.
  The following provides an example of case (2) of Theorem D with ${\bf O}_{p'}(G)=1$. Let $K= {\sf SmallGroup} (75, 2) = \langle a, b , r\rangle\cong (C_5\times C_5)\rtimes C_3$,  acting faithfully on the vector space $V=\mathbb{ F}_{31}^3$.    The action is described by the following matrices with entries in $\mathbb{F}_{31}$:

  $$ a= \begin{pmatrix}
1 & 0 & 0 \\
0 & 2 & 0\\
0 & 0 & 16
\end{pmatrix},
\, b= \begin{pmatrix}
2 & 0 & 0 \\
0 & 1 & 0\\
0 & 0 & 16
\end{pmatrix},
\, r= \begin{pmatrix}
0 & 1 & 0 \\
0 & 0 & 1\\
1 & 0 & 0
\end{pmatrix}
.$$
The group  $G=V\rtimes K$ is $\mathcal{U}$-semicritical with respect to $p=31$, with $K$  being a maximal subgroup of $p'$-order that is $\mathcal{U}$-critical, and $\mathcal{A}$-critical. Also, note that $|{\bf O}_p(G)|=p^3$.

\medskip

 We  emphasize that the  maximal  non-supersolvable subgroup $M$  arising in the proof of case (3) in Theorem D may fall into one of two possibilities: ${\bf O}_{p'}(G)<M$ or ${\bf O}_{p'}(G)=M$. We illustrate these two cases with the next two examples.  

\medskip
\noindent
{\bf Example 6}.
For the case ${\bf O}_{p'}(G)<M$, consider $H\cong C_7\rtimes C_3$, the non-abelian group of order $21$, which acts on the $3$-dimensional vector space $V=\mathbb{ F}_{29}^3$ over the field of $29$ elements (indeed $H\leq {\rm GL}(3, 29)$). The action can be  described  by the following matrices, whose entries lie $\mathbb{F}_{29}$:
$$ a= \begin{pmatrix}
16 & 0 & 0 \\
0 & 24 & 0\\
0 & 0 & 25
\end{pmatrix},
\, b= \begin{pmatrix}
0 & 1 & 0 \\
0 & 0 & 1\\
1 & 0 & 0
\end{pmatrix}.
$$
These matrices satisfy $a^7=1, b^3=1$ and $a^b=a^4$, whence $H=\langle a, b  \rangle$ is isomorphic to $C_7\rtimes C_3$. The corresponding  semidirect product  $G=V\rtimes H$ is then   $\mathcal{U}$-semicritical with respect to $7$. In addition, $M=V\rtimes \langle b \rangle$ is a maximal subgroup of $7'$-order that properly contains ${\bf O}_{7'}(G)=V$. Furthermore, in response to Remark \ref{Remark 4.3Ex}, we can see that $M$ is not $\mathcal{U}$-critical. Indeed, the action of $C_3$ on $V$ gives rise to precisely one invariant subspace of dimension $1$,  say $V_1$, and one invariant subspace $V_2$ of dimension 2, in such a way that $V=V_1\times V_2$ is the only decomposition of $V$ into invariant subspaces (in fact, the action is semisimple by Masche's Theorem   [\citenum{KurzweilStellmacher}, 8.4.6]).  Consequently, $M$ is not supersolvable; moreover, since $V_2\rtimes \langle b \rangle$ is neither supersolvable,  $M$ is not $\mathcal{U}$-critical.

\medskip
\noindent
{\bf Example 7}.
For the second possibility, that is, ${\bf O}_{p'}(G)=M$, we may take 
$$G={\sf SmallGroup}(588, 34)\cong (C_7\times C_7)\rtimes C_{12},$$ which has the following presentation:
$$G=\langle x, y, t \mid  x^7=y^7=t^{12}=1, [x,y]=1, x^t=xy^2, y^t=xy^6\rangle .$$
This group is $\mathcal{U}$-semicritical with respect to $3$ and  satisfies ${\bf O}_{3'}(G)\cong (C_7\times C_7)\rtimes C_4$, which is a maximal $3'$-subgroup that is not supersolvable because $\langle t^3\rangle \cong C_4$ acts irreducibly on $\langle x \rangle \times \langle y  \rangle \cong C_7\times C_7$.

\bigskip
\noindent
{\bf Acknowledgements}
The results of this paper form part of the second author's Ph.D. thesis at the Universitat Jaume
I of Castell\'on. The first author is partially supported by the National Nature Science Fund (No. 12071181) of People's Republic of China.

\medskip
\noindent
{\bf Data availability} Data sharing is not applicable to this article as no data sets were generated or analyzed during
the current study.

\bigskip
\noindent
{\bf \large Declarations}

\medskip
\noindent
{\bf Conflict of interest} The authors have no conflicts of interest to declare.

\bibliographystyle{plain}

\begin{thebibliography}{1}



\bibitem{BallesterEsteban}
Ballester-Bolinches, A., Esteban-Romero, R.:
On minimal non-supersoluble groups.
\newblock {\em Rev. Mat. Iberoam.} {23}, no. 1, 127--142 (2007).

\bibitem{BeltranShao}
Beltrán, A., Shao, C.:
Supersolvable subgroups of order divisible by 3.
{\em Results Math.} {80}, no. 5, 127 (2025). https://doi.org/10.1007/s00025-025-02443-0


\bibitem{BHR}
Bray, J.,  Holt, D., Roney-Dougal, C.M.: The Maximal Subgroups of the Low-Dimensional Finite Classical Groups, London Math. Soc. Lecture Note Series, vol. 407. Cambridge University Press, Cambridge (2013).

\bibitem{Burnside1895}
Burnside, W.:
Notes on the theory of groups of finite order
\newblock{\em Proc. London Math. Soc.} 26, no. 1, 191–214 (1895).
%Collected Papers, vol. I, Note VII, 561–214. 
https://doi.org/10.1112/plms/s1-26.1.191

\bibitem{Cameron1981}
Cameron, P. J.:
Finite permutation groups and finite simple groups
\newblock{\em Bull. London Math. Soc.} 13. 1–22 (1981).

\bibitem{Atlas}
Conway, J. H., Curtis, R. T., Norton, S. P., Parker, R. A., Wilson, R. A.:
Atlas of finite groups.
\newblock Oxford University Press, Oxford (1985).

\bibitem{Dietrich}
Dietrich, H., Lee, M., Popiel, M.: The maximal subgroups of the Monster. 
\newblock{\em Adv. Math.} { 469}, 110214 (2025).
https://doi.org/10.1016/j.aim.2025.110214

\bibitem{Doerk}
Doerk, K.: Minimal nicht \"uberaufl\"osbare, endliche Gruppen. Math. Z. { 91}, no. 3, 198-205 (1966).

%\bibitem{Doerk2}
%Doerk, K., Klaus, T.: Finite soluble groups. 
%De Gruyter Exp. Math., 4  
%Walter de Gruyter, no. 4, Berlin (1992). 

\bibitem{GAP4}
GAP -- Groups, Algorithms, and Programming, Version 4.14.0
\newblock The  GAP  Group (2024).
\newblock \url{https://www.gap-system.org}.

\bibitem{GuralnickShumyatsky}
Guralnick, R., Shumyatsky, P.:
Derived subgroups of fixed points.
\newblock{\em Israel J. Math.} 126. no.1, 345-362 (2001).

\bibitem{HardyWright}
Hardy, G.H., Wright, E.M.:
An Introduction to the Theory of Numbers.
Oxford Univ. Press, Oxford (197).

\bibitem{KleidmanJ4}
Kleidman, P. B., Wilson, R. A.:
The maximal subgroups of $J_{4}$.
\newblock{\em Proc. London Math. Soc. (3)} 56, no.~3, 484--510. (1988)

\bibitem{KleidmanFi23}
Kleidman, P. B., Parker, R. A., Wilson, R. A.:
The maximal subgroups of the Fischer group $\mathrm{Fi}_{23}$.
\newblock{\em J. London Math. Soc.} 2, no. 1, 89--101. (1989)


\bibitem{KurzweilStellmacher}
Kurzweil, H., Stellmacher, B.:
The Theory of Finite Groups. An Introduction.
Universitext, Springer, New York (2004).

\bibitem{LiMeng}
Li, S.S., Meng, W.: Classification of finite groups satisfying a minimal condition.
{\em Siberian Math. J.} { 50}, no. 1, 100--106 (2009).

\bibitem{LintonTh}
Linton, S. A.:
The maximal subgroups of the Thompson group.
\newblock{\em J. London Math. Soc. (2)} 39 (1989), 79--88.

\bibitem{LintonThCorr}
Linton, S. A.:
Corrections to ``The maximal subgroups of the Thompson group''.
\newblock{\em J. London Math. Soc. 2,} no. 2, 253--254. (1991)

\bibitem{LintonFi24}
Linton, S. A., Wilson, R. A.:
The maximal subgroups of the Fischer groups $\mathrm{Fi}_{24}$ and $\mathrm{Fi}_{24}'$.
\newblock{\em Proc. London Math. Soc. } s3-63, no. 1, 113--164. (1991)

\bibitem{Meng}
Meng, W.,  Lu, J.K.: Finite groups with supersolvable subgroups of even order. {\em Ric. Mat.} { 73}, no. 2, 1059-1064 (2024). DOI: 10.1007/s11587-021-00656-3

\bibitem{MillerMoreno}
Miller, G.A., Moreno, H.C.: Non-abelian groups in which every subgroup is abelian.
{\em Trans. Amer. Math. Soc.} { 4}, no. 4, 398--404 (1903).

\bibitem{Moreto}
Moret\'o, A.:
Finite groups whose maximal subgroups of order divisible by all the primes are supersolvable.
{\em Monatsh. Math.} { 195}, 497--500 (2021).
DOI: 10.1007/s00605-020-01492-7

%\bibitem{Robinson}
% Robinson, D.J.:
% A course in the theory of groups, 2nd ed., Springer-Verlag, New
%York-Heidelberg-Berlin (1996).

\bibitem{Schmidt}
Schmidt, O.Yu.: Groups whose subgroups are all special.
{\em Mat. Sb.} { 31}, no. 3-4, 366--372 (1924).

\bibitem{ShiTian}
Shi, J., Tian, Y.:
On finite groups in which every maximal subgroup of order divisible by $p$ is nilpotent (or abelian).
{\em Rend. Sem. Mat. Univ. Padova} { 154}, 143--149 (2025).
DOI: 10.4171/RSMUP/159

\bibitem{Wilde}
Wilde, T.:
On a recent result of Beltr\'an and Shao, submitted.

\bibitem{WilsonB}
Wilson, R. A.:
The maximal subgroups of the Baby Monster, I.
\newblock{\em J. Algebra} { 211}, no.~1, 1--14.  (1999). 



\end{thebibliography}

\end{document}